\documentclass[12pt]{amsart}
\usepackage{color}
\usepackage[all]{xy}
\usepackage{amssymb}
\setlength{\textwidth}{16truecm}
\setlength{\textheight}{22.1truecm}
\calclayout

\newtheorem{dummy}{anything}[section] 
\newtheorem{theorem}[dummy]{Theorem}
\newtheorem*{thma}{Theorem A}
\newtheorem*{thmb}{Theorem B}
\newtheorem*{cor}{Corollary}
\newtheorem{lemma}[dummy]{Lemma} 
\newtheorem{proposition}[dummy]{Proposition} 
\newtheorem{corollary}[dummy]{Corollary}
 
\theoremstyle{definition}
\newtheorem{definition}[dummy]{Definition}
 \newtheorem{example}[dummy]{Example}
 
 \newtheorem{remark}[dummy]{Remark}




\newcommand
{\eqncount}{\setcounter{equation}{\value{dummy}}%
\addtocounter{dummy}{1}}
\newcommand{\cA}{\mathcal A}
\newcommand{\cB}{\mathcal B}
\newcommand{\cC}{\mathcal C}
\newcommand{\cD}{\mathcal D}

\newcommand{\cF}{\mathcal F}

\newcommand{\cV}{\mathcal V}

\newcommand{\bZ}{\mathbf Z}

\newcommand{\Or}{\mathbf{Or}}

\newcommand{\wX}{\overline X}

\newcommand{\wD}{\overline D}
\newcommand{\wZ}{\overline Z}
\newcommand{\wY}{\overline Y}
\newcommand{\wS}{\overline S}
\newcommand{\wT}{\overline T}

\newcommand{\vv}{\, | \,}

\newcommand{\mmatrix}[4]{\left (\vcenter
{\xymatrix@C-2pc@R-2pc{#1&#2\\#3&#4} }
\right )}


\DeclareMathOperator{\Ind}{Ind}
\DeclareMathOperator{\Res}{Res}

 

\newcommand{\Gspaces}{\text{$\G$-$CW$-Complexes}}
\newcommand{\sst}{\scriptscriptstyle}
\newcommand{\nms}{\negmedspace}

\newcommand{\LG}{\mathbb L^{\nms\raise 1pt\hbox{$\sst geom$}}}
\newcommand{\LA}{\mathbb L^{\nms\raise 1pt\hbox{$\sst alg$}}}
\newcommand{\Ls}{\mathbb L^{\negthickspace\raise 1pt\hbox{$\sst s$}}}
\newcommand{\LI}{\mathbb L^{\nms\raise 1pt\hbox{$\sst -\infty$}}}
\newcommand{\KI}{\mathbb K^{\nms\raise 1pt\hbox{$\sst -\infty$}}}
\newcommand{\Lh}{\mathbb L^{\nms\raise 1pt\hbox{$\sst h$}}}
\newcommand{\Lp}{\mathbb L^{\negthickspace\raise 1pt\hbox{$\sst p$}}}
\newcommand{\bLI}{\mathbf L^{\nms\raise 1pt\hbox{$\sst-\infty$}}}
\newcommand{\nLI}{L^{\nms\raise 1pt\hbox{$\sst-\infty$}}}

\newcommand{\G}{G}

\newcommand{\Vertical}{\,|\,}
\newcommand{\BRcat}[1]{\cB_{\G}(#1;R)}

\newcommand{\DAcat}[1]{\cD(#1;\cA)}
\newcommand{\DAB}[3]{\cD^{#2}(#1;#3)}
\newcommand{\DABl}[3]{\cD^{#2}_l(#1;#3)}
\newcommand{\Dcatl}[3]{\cD^{#2}_l(#1\times [0,1);#3)}

\newcommand{\DAcatG}[1]{\cD^{G}(#1\times [0,1);\cA)}
\newcommand{\DAcatH}[3]{\cD^{#2}(#1\times [0,1);#3)}
\newcommand{\Dlcat}[3]{\cD^{#2}_l(#1\times [0,1);#3)}
\newcommand{\Iopen}{[0,1)}

\newcommand{\Lbar}[1]{{\overset{\, \hrulefill\, }{#1}}}



\DeclareMathOperator{\supp}{supp}

\newcommand{\nr}[1]{\medskip\noindent{\bf (${\mathbf #1}$)}.}

\begin{document}
\title[Assembly for Group Extensions]
{Assembly Maps for Group Extensions in $K$-Theory and $L$-Theory With Twisted Coefficients}
\author{Ian Hambleton} 
\thanks{Partially supported by NSERC grant A4000
and  NSF grant  DMS 9104026. The authors also
wish to thank the SFB 478, Universit\"at M\"unster,
for hospitality and support.}
\address{Department of Mathematics \& Statistics
 \newline\indent
McMaster University
 \newline\indent
Hamilton, ON  L8S 4K1, Canada}
\email{ian{@}math.mcmaster.ca}
\author{Erik K. Pedersen}
\address{Department of Mathematical Sciences
 \newline\indent
SUNY at Binghamton
 \newline\indent 
Binghamton, NY 13901, USA}
\email{erik@math.binghamton.edu}
\author{David Rosenthal}
\address{Department of Mathematics and Computer Science
 \newline\indent
St. John's University
 \newline\indent 
Jamaica, NY 11439, USA}
\email{rosenthd@stjohns.edu}
\date{March 31, 2008} 
\begin{abstract}\noindent
In this paper we show that the Farrell-Jones isomorphism conjectures are inherited in group extensions for assembly maps in $K$-theory and $L$-theory with twisted coefficients.
\end{abstract}
\maketitle
\section*{Introduction}
Under what assumptions are the Farrell-Jones isomorphism conjectures inherited by group extensions or subgroups? We will formulate a version of the standard conjectures (see Farrell-Jones \cite{farrell-jones1}) with twisted coefficients in an additive category, and then study these questions via the continuously controlled assembly maps of \cite[\S 7]{hp4}. A formulation using the Davis-L\"uck assembly maps \cite{davis-lueck1} has already been given by Bartels and Reich \cite{bartels-reich2}, and applied there to show inheritance by subgroups. Recall that the Farrell-Jones conjecture in algebraic $K$-theory asserts that certain ``assembly" maps 
$$H^G_n(E_{\cV\cC}G; \mathbb K_R) \to K_n(RG)$$
are isomorphisms, for a given ring $R$, and all $n\in \bZ$. Here the space 
$E_{\cV\cC}G$ is the universal $G$-CW-complex for $G$-actions with virtually cyclic isotropy, and the left-hand side denotes equivariant homology with coefficients in the non-connective $K$-theory spectrum for the ring $R$.

\begin{thma}  Let $N \to G \xrightarrow{\pi} K$ be a group extension, where $N\triangleleft\, G$ is a normal subgroup, and $K$ is the quotient group. Let 
$\cA$ be an additive category with  $G$-action. Suppose that
\begin{enumerate}
\item The group $K$ satisfies the Farrell-Jones conjecture in algebraic $K$-theory, with  twisted coefficients in any additive category with $K$-action.
\item Every subgroup of $G$ containing $N$ as a subgroup, with virtually cyclic quotient, satisfies the Farrell-Jones conjecture in algebraic $K$-theory, with twisted coefficients in $\cA$.
\end{enumerate}
Then the group $G$ satisfies the Farrell-Jones conjecture in algebraic $K$-theory, with  twisted coefficients in $\cA$. 
\end{thma}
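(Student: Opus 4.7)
The plan is to use a transitivity principle in the continuously controlled framework of \cite{hp4}, reducing the theorem to two subclaims. Introduce the family
\[
\cF' = \pi^{-1}(\cV\cC(K)) = \{H \le G : \pi(H) \text{ is virtually cyclic}\}.
\]
This is a family of subgroups of $G$ (closed under conjugation and under passage to subgroups) and contains $\cV\cC(G)$, since a quotient of a virtually cyclic group is virtually cyclic. The universal space $E_{\cF'}G$ is modeled by the pullback $\pi^*E_{\cV\cC}K$, because a subgroup $H \le G$ has nonempty fixed set in $\pi^*E_{\cV\cC}K$ precisely when $\pi(H)$ does in $E_{\cV\cC}K$, i.e.\ precisely when $\pi(H) \in \cV\cC(K)$.

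The first subclaim is that the $\cF'$-assembly map
\[
H^G_*(E_{\cF'}G; \mathbb K_{\cA}) \to K_*(\cA[G])
\]
is an isomorphism. My strategy is to construct a $K$-category $\cB$ out of the $G$-category $\cA$ and the normal subgroup $N$---a twisted additive analogue of the group-ring decomposition $RG \cong (RN)[K]$ valid in the split case---such that $\cB[K]$ is equivalent to $\cA[G]$ and, for every $K$-CW-complex $Y$, the $G$-equivariant homology $H^G_*(\pi^*Y; \mathbb K_{\cA})$ is naturally identified with the $K$-equivariant homology $H^K_*(Y; \mathbb K_{\cB})$, compatibly with the assembly maps. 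Taking $Y = E_{\cV\cC}K$ then exhibits the displayed map as the $\cV\cC$-assembly map for $K$ with twisted coefficients in $\cB$, which is an isomorphism by hypothesis (i).

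The second subclaim is that the comparison map
\[
H^G_*(E_{\cV\cC}G; \mathbb K_{\cA}) \to H^G_*(E_{\cF'}G; \mathbb K_{\cA})
\]
is an isomorphism. This follows from the transitivity principle, provided we verify that every $H \in \cF'$ satisfies the Farrell-Jones conjecture in $K$-theory for the restricted action of $H$ on $\cA$, with family $\cV\cC(G) \cap H = \cV\cC(H)$. For such an $H$, the subgroup $NH \le G$ contains $N$ and satisfies $NH/N \cong \pi(H) \in \cV\cC(K)$, so hypothesis (ii) yields the Farrell-Jones isomorphism for $NH$ with coefficients in $\cA$; subgroup inheritance for the Farrell-Jones conjecture with twisted coefficients (as established by Bartels-Reich \cite{bartels-reich2} and adapted to the continuously controlled setting) then transfers this to the subgroup $H \le NH$. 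The main obstacle lies in the first subclaim: constructing the twisted $K$-category $\cB$ and establishing the natural equivalence of continuously controlled categories $\cD^G(\pi^*Y \times [0,1); \cA) \simeq \cD^K(Y \times [0,1); \cB)$ requires a careful fibered analysis of how the normal subgroup $N$ interacts with the continuously controlled structure, while keeping precise track of the twisted $G$-action on $\cA$.
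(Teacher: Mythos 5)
Your overall blueprint---reduce along the family $\cF' = \pi^{-1}(\cV\cC(K))$, identify the $\cF'$-assembly map for $G$ with an assembly map for $K$ with a derived coefficient category, then compare $\cV\cC(G)$ to $\cF'$ by a transitivity argument---is the same Oyono-Oyono strategy the paper follows. But your second subclaim contains a genuine logical gap, and your first subclaim (which you acknowledge is only sketched) is where all the technical content of the paper lives.

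The gap in the second subclaim: for an arbitrary $H \in \cF'$ you propose to deduce the Farrell-Jones isomorphism for $H$ with coefficients $\Res_H\cA$ from the one for $NH$ (given by hypothesis (ii)) via subgroup inheritance. But subgroup inheritance, in the form proved in this paper (the Corollary following Proposition~\ref{prop:subgroups}) and in Bartels--Reich, passes from the ambient group to $H$ only if you know the assembly map for the ambient group with coefficients in the \emph{induced} category $\Ind_H^{NH}(\Res_H\cA)$---equivalently, you must know it for \emph{all} coefficient categories. Hypothesis (ii) gives the conjecture for $NH$ only with coefficients in $\Res_{NH}\cA$, and $\Ind_H^{NH}(\Res_H\cA)$ is not equivalent to $\Res_{NH}\cA$ unless $N\le H$. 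So the step ``$NH$ implies $H$'' does not follow from what is assumed.

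The repair is to avoid ever needing groups $H\in\cF'$ that do not contain $N$: the model $\pi^*E_{\cV\cC}K$ you already introduced has isotropy groups \emph{exactly} of the form $\pi^{-1}(V)$ with $V\in\cV\cC(K)$, never proper subgroups thereof, so the orbit-by-orbit (transitivity/excision) verification only involves the groups that hypothesis (ii) addresses directly. The paper enforces this structurally: it works with $G\times K$ acting on $E_{\cF_G}G\times E_{\cF_K}K$, using the transitive $(G\times V)$-action on $K$ whose point stabilizer is $V'\cong\pi^{-1}(V)$ (Example~\ref{ab} and Remark~\ref{cd}), and Proposition~\ref{step2} plays the role of your black-box transitivity principle while checking only on orbits $K/V$. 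Your first subclaim---building a $K$-category $\cB$ and the equivalence of controlled categories---is precisely what Lemmas~\ref{zero}--\ref{three} and Propositions~\ref{step1}--\ref{step2} carry out: the coefficient category is $\cD^0_l(K;\cA)$, then further $\cD^G(\bullet;\cB)$, and the needed equivalences are constructed by explicit functors whose well-definedness (in particular the use of transitivity of $K$ as a biset) is the content of Section~\ref{appendix}. This is not a formality one can wave at; it is the heart of the proof.
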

This is a special case of a more general result (see Theorem \ref{mainone}). The same statement holds for algebraic $L$-theory as well, where the coefficient categories are additive categories with involution. The corrresponding result for the Baum-Connes conjecture was obtained by Oyono-Oyono \cite{oyono-oyono1}, and our proof follows the outline given there.
One of the main points is that the most effective methods known for proving the standard Farrell-Jones conjectures (for particular groups $G$) also work for the twisted coefficient versions (compare  \cite{bartels-farrell-jones-reich1}, \cite{bartels-reich1}, \cite{cp1}, \cite{cp2}, \cite{rosenthal1}, \cite{rosenthal2}, and  \cite{rosenthal-schutz1}). An immediate corollary to Theorem A is the following.

\begin{cor}[Corollary~\ref{vproducts}]
The Farrell-Jones conjecture with twisted coefficients is true for $\G_1\times \G_2$ if and only if it is true for $\G_1$,  $\G_2$, and every product
$V_1 \times V_2$, where $V_1\leq \G_1$ and $V_2\leq \G_2$ are virtually cyclic subgroups.
\end{cor}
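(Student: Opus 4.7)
The plan is to apply Theorem~A twice, viewing $\G_1 \times \G_2$ as an iterated product extension. The ``only if'' direction is immediate: $\G_1$, $\G_2$, and every product $V_1 \times V_2$ embed as subgroups of $\G_1 \times \G_2$, so one invokes the subgroup-inheritance property of the Farrell--Jones conjecture with twisted coefficients (as established in Bartels--Reich \cite{bartels-reich2}). I therefore focus on the reverse direction.

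Fix an additive category $\cA$ carrying a $\G_1 \times \G_2$-action. For each virtually cyclic subgroup $V_2 \leq \G_2$, the first step is to apply Theorem~A to the split extension
\[
1 \to V_2 \to \G_1 \times V_2 \to \G_1 \to 1,
\]
with $N = \{e\} \times V_2$ and quotient $K = \G_1$. Hypothesis~(1) of Theorem~A---that $K$ satisfies the Farrell--Jones conjecture with twisted coefficients in every additive category carrying a $K$-action---is precisely the assumption on $\G_1$. The intermediate subgroups required by hypothesis~(2), namely those of $\G_1 \times V_2$ containing $\{e\}\times V_2$ with virtually cyclic quotient, are exactly the products $V_1 \times V_2$ for virtually cyclic $V_1 \leq \G_1$, and the Farrell--Jones conjecture for these groups is a hypothesis of the corollary. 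Theorem~A thus yields the conjecture for $\G_1 \times V_2$ with coefficients in (the restriction of) $\cA$.

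With this in hand, the second step is to apply Theorem~A to the extension
\[
1 \to \G_1 \to \G_1 \times \G_2 \to \G_2 \to 1.
\]
Here hypothesis~(1) is the assumption on $\G_2$, while the intermediate subgroups in hypothesis~(2) are the preimages of virtually cyclic subgroups $V_2 \leq \G_2$ under the projection, i.e.\ the products $\G_1 \times V_2$; these were handled in the first step. Theorem~A then delivers the Farrell--Jones conjecture for $\G_1 \times \G_2$ with coefficients in $\cA$.

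The argument is essentially formal given Theorem~A, so there is no serious obstacle. The only point requiring care is the identification of the intermediate subgroups in each application---namely, that subgroups of $H_1 \times H_2$ containing $\{e\}\times H_2$ are exactly the products $K \times H_2$ with $K \leq H_1$---and the book-keeping of coefficient categories under restriction from $\G_1 \times \G_2$ to the relevant subgroups.
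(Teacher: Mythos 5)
Your proof is correct and follows essentially the same approach as the paper: both apply the extension theorem twice, once for the projection $\G_1\times V_2\to \G_1$ and once for $\G_1\times \G_2\to \G_2$, reducing to products of virtually cyclic subgroups. The paper just presents the two reductions in the opposite order (top-down rather than bottom-up), but the argument is the same; your identification of the intermediate subgroups containing $\{e\}\times V_2$ with the products $V_1\times V_2$ is the small point of verification in either presentation, and you handle it correctly.
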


The fibered isomorphism conjecture of Farrell and Jones~\cite{farrell-jones1} for a group $G$ and a ring $R$ asserts that for every group homomorphism, $\phi\colon H\to G$, the assembly map for $H$ relative to the family generated by the subgroups $\phi^{-1}(V)$, $V \subset G$ virtually cyclic, is an isomorphism. This conjecture implies the Farrell-Jones conjecture and has better inheritance properties. For example, the fibered version of our Theorem A is also true (see, for example, \cite[Section 2.3]{bartels-lueck-reich1}). The following result shows that the Farrell-Jones conjecture with twisted coefficients implies the Fibered Farrell-Jones conjecture.

\begin{thmb} Suppose that $\phi\colon H\to G$ is a group homomorphism. Then the Farrell-Jones conjecture for $G$, with twisted coefficients in any $G$-category, implies that the assembly map for $H$  relative to the family generated by the subgroups $\phi^{-1}(V)$, $V \subset G$ virtually cyclic, is an isomorphism with twisted coefficients in any $H$-category.
\end{thmb}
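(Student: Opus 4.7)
The plan is to reduce Theorem B to the hypothesized Farrell--Jones conjecture for $G$ by induction of twisted coefficients along $\phi$. Given an additive $H$-category $\cA$, the first task is to construct an additive $G$-category $\Ind_\phi\cA$ together with a natural Morita-type equivalence $\cA\rtimes H \simeq (\Ind_\phi\cA)\rtimes G$ that induces an isomorphism of nonconnective $K$-theory spectra. The model is the familiar induced-module construction: objects are $G/\phi(H)$-indexed families of objects of $\cA$, with the evident $G$-action permuting indices and the twisted $H$-action on stalks. One must then verify that this construction extends to a natural transformation of $\Or$-spectra along the functor $\Or(H)\to\Or(G)$ induced by $\phi$, compatibly with the continuously controlled assembly maps of \cite{hp4}.

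The second step identifies the classifying spaces. Let $X=E_{\cV\cC}G$ and pull back its $G$-action along $\phi$ to view $X$ as an $H$-CW complex. For any $L\leq H$ the $L$-action on $X$ factors through $\phi(L)$, so $X^L=X^{\phi(L)}$. When $L$ lies in the family generated by $\phi^{-1}(V)$ with $V\in\cV\cC$, the image $\phi(L)\leq V$ is virtually cyclic, so $X^L$ is contractible. Hence $X$ is a model for the classifying $H$-space $E_{\phi^{-1}\cV\cC}H$. Combined with Step 1, this yields a commutative square
\[
\xymatrix{
H^H_n(E_{\phi^{-1}\cV\cC}H;\mathbb K_\cA) \ar[r] \ar[d]^{\cong} & K_n(\cA\rtimes H) \ar[d]^{\cong}\\
H^G_n(E_{\cV\cC}G;\mathbb K_{\Ind_\phi\cA}) \ar[r] & K_n((\Ind_\phi\cA)\rtimes G)
}
\]
in which the vertical maps are isomorphisms, and the bottom arrow is the $G$-assembly map for the family $\cV\cC$ with twisted coefficients $\Ind_\phi\cA$. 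By hypothesis the bottom arrow is an isomorphism, so the top is too, which is precisely the fibered statement of Theorem B. The $L$-theory case is handled by the same argument after endowing $\cA$ with an $H$-equivariant involution, which $\Ind_\phi\cA$ then inherits as a $G$-equivariant involution.

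The main technical hurdle is Step 1: carrying out the induction construction cleanly at the level of the continuously controlled assembly machinery of \cite{hp4}, and verifying its naturality as a map of $\Or$-spectra. The analogous statement for Baum--Connes appears in Oyono-Oyono \cite{oyono-oyono1}, and the strategy outlined there should transfer to the algebraic setting via the methods developed elsewhere in this paper.
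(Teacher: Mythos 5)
Your high-level strategy --- induce the twisted coefficient category along $\phi$, observe that $\Res_\phi E_{\cV\cC}G$ is a model for the classifying space of the pulled-back family, and read off the result from a commutative square --- is the same strategy the paper uses (Proposition~\ref{maintwo}, reduced to Proposition~\ref{prop:subgroups}). Your identification of the classifying space in Step~2 is correct and spelled out a little more explicitly than in the paper.

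However, you explicitly defer Step~1 as ``the main technical hurdle,'' and that hurdle \emph{is} the proof. Two issues. First, your proposed model of $\Ind_\phi\cA$ --- ``objects are $G/\phi(H)$-indexed families of objects of $\cA$, with the evident $G$-action permuting indices and the twisted $H$-action on stalks'' --- is not the paper's construction and, as stated, is ambiguous: once you index by cosets $G/\phi(H)$ there is no canonical place for the $H$-twist to live, and making the $G$-action, the $H$-twist, and the control conditions coexist is exactly what requires care. The paper instead sets $\Ind_\phi\cA := \cD^H_l(G;\cA)$, the $H$-equivariant level-preserving controlled category over $G$ regarded as an $H$-$G$ biset (with $H$ acting on the left via $\phi$). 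Objects are then indexed by all of $G$, not by cosets, and the right $G$-action is index translation; the twist by $H$ is encoded in the $H$-invariance condition on morphisms. This choice is what makes the change-of-coefficients machinery (Lemmas~\ref{zero}, \ref{one}, \ref{two}, and the detailed verification in Section~\ref{appendix}) applicable verbatim. Second, the asserted ``Morita-type equivalence'' and the naturality of the square in $X$ are not free: in the paper they follow from the chain $\cD^G(X\times\Iopen;\Ind_\phi\cA)^{>0}\simeq \cD^{H\times G}_l(G\times X\times\Iopen;\cA)^{>0}\simeq \cD^H(\Res_\phi X\times\Iopen;\cA)^{>0}$, where the first equivalence is Lemma~\ref{one} (using that $G$ is a transitive $H$-$G$ biset) and the second is Lemma~\ref{two} with $N=G\lhd H\times G$ acting freely on $G\times X$; naturality in $X$ is then automatic because both steps are defined spacewise. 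Unless you actually produce an equivalence of this kind with the needed naturality --- which is what most of Sections~2 and~5 of the paper are devoted to --- the argument is a plan, not a proof.
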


The corresponding result for the Davis-L\"uck assembly maps was obtained by Bartels-Reich \cite{bartels-reich2}, who also pointed out a number of applications of the assembly map with twisted coefficients, including the study the $K$- and $L$-theory of twisted group rings (see also Example \ref{exone} and Example \ref{extwo} below). One can check as in \cite{hp4} that those assembly maps are equivalent to the continuously controlled assembly maps used in this paper.

\section{ Assembly via Controlled Categories}\label{}
The controlled categories of
Pedersen \cite{p19}, Carlsson-Pedersen \cite{cp1},
\cite{cpv1} are our main tool for identifying
various different assembly maps. We will recall
the definition of these categories, and then the usual assembly maps
are obtained by applying  functors 
$$H\colon \Gspaces \to Spectra$$ as described in \cite{hp4}. We will extend the earlier definitions in order to allow an additive category as coefficients, instead of just working with modules over a ring $R$. 
A formulation for assembly maps with coefficients in the setting of \cite{davis-lueck1} has already been given in \cite{bartels-reich2}. Following the method of \cite{hp4}, one can check that the two different descriptions give the same assembly maps.

Let $\G$ be any discrete group, and let $X$ be
a $\G$-CW complex (we will use a left $G$-action). Subspaces of the form $\G \cdot D \subset X$, with $D$ compact in $X$, are called $\G$-\emph{compact} subspaces of $X$. More generally, a subspace whose closure has this form is called relatively $\G$-compact.
A \emph{resolution} of $X$ is a pair $(\wX, p)$, where $\wX$ is a free $\G$-CW complex and 
$p\colon \wX \to X$ is a continuous $G$-equivariant map, such that
for every $\G$-compact set $\G\cdot D\subset X$ there exists a $G$-compact set $\G\cdot\wD \subset \wX$ such that $p(\G\cdot\wD) = \G \cdot D$. The notion of resolution comes from \cite{p19}, and was developed further in  \cite[\S 3]{bartels-farrell-jones-reich1}. The original example was $\wX =  \G\times X$, with the diagonal $\G$-action and first factor projection.

Let $\cA$ be an additive category with involution, and suppose that $\cA$ has a right $G$-action compatible with the involution. This is a collection of covariant functors $\{g^*\colon \cA \to \cA, \forall g\in G\}$, such that $(g\circ h)^* = h^*\circ g^*$ and $e^* = id$. We require
that the functors $g^*$ commute with the involution $\ast\colon
\cA \to \cA$ (an involution is a contravariant functor with square the identity).

\begin{definition} Let $(Z, X)$ be a $G$-CW pair, where $X$ is a closed $G$-invariant subspace. Let $Y = Z -X$, and fix a resolution $p\colon \wZ \to Z$, whose restriction to $Y$ is denoted $\wY$.
The  category $\DAcat{Z, X}$ has
objects $A = (A_{y})$ consisting of a collection of objects of $\cA$, indexed by $y \in \wY$, and 
morphisms  $\phi\colon A \to B$  consisting
of collections  $\phi =(\phi_y^z)$ of morphisms
$\phi_y^z\colon A_y \to B_z$ in $\cA$, indexed by $y,z \in \wY $, satisfying:
\begin{enumerate}
\item the support $\{ y \in \wY \Vertical A_y \neq 0\}$
is \emph{locally finite} in $\wY$, and relatively $\G$-compact
in $\wZ$.

\item for each morphism $\phi\colon A \to B$, and for each $y\in \wY$, the set
$\{z \vv   \phi_y^z\neq 0 \text{\ or\ } \phi_z^y\neq 0\}$  is finite.

\item the morphisms $\phi\colon A \to B$ are \emph{continuously controlled} at
$X \subset Z$. For every 
$x \in X$, and for every $\G_x$-invariant neighbourhood $U$ of 
$x$ in $Z$, there is a $\G_x$-invariant neighbourhood
$V$ of $x$ in $Z$ so that
$\phi_y^z =0$ and $\phi_z^y =0$ whenever
$p(y) \in (Y -U)$ and $p(z) \in (V \cap U\cap Y)$. 
\end{enumerate}

\end{definition}
If $X=\emptyset$, we use the shorter notation
$\DAcat{Z}:= \DAcat{Z,\emptyset}$,
and in this case the continous control condition (iii) on morphisms is vacuous.
 If $S$ is a discrete left $G$-set, we denote by
$\DABl{S\times Z,S\times X}{}{\cA}$ the subcategory where the morphisms are $S$-level-preserving: $\phi_{(s,y)}^{(s',z)} = 0$ if $s \neq s' \in S$, for any $y,z \in Y$.

The category $\DAcat{Z,X}$ is an additive category with involution, 
where the dual of $A$ is given by $(A^*)_y=A^*_y$ for all $y \in \wY $.
It depends functorially on the pair $(Z,X)$ of $\G$-CW complexes.
The actions of $G$ on $\cA$ and $Z$ induce a right $\G$-action on
$\DAcat{Z,X}$. For $g\in \G$, we set $(gA )_y= g^*A_{gy}$ and
$(g\phi)_y^z = g^*(\phi_{gy}^{gz})$. The fixed subcategory will be denoted $\DAB{Z,X}{\G}{\cA}$. If $\G=\{e\}$ is the trivial group, we use the notation $\DAB{Z,X}{0}{\cA}$.
We have not included the resolution $(\wZ, p)$ in the notation, because two different resolutions give $\G$-equivalent categories
(see \cite[Prop.~3.5]{bartels-farrell-jones-reich1}). 
We can compare these fixed subcategories to the equivariant category $\BRcat{Z,X}$ defined in \cite[\S 7]{hp4}.

\begin{lemma} There is an equivalence of categories
$\BRcat{Z,X} \simeq \DAB{Z,X}{\G}{\cA}$, when $\cA$ is the category of finitely-generated free $R$-modules.
\end{lemma}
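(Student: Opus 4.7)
The plan is to exhibit mutually inverse functors by directly matching the two definitions. The key point is that, because the resolution $p\colon \wZ \to Z$ is by definition a free $\G$-CW complex, a strict $G$-fixed object of $\DAcat{Z,X}$ is the same data as a $G$-equivariant locally finite family of f.g.\ free $R$-modules supported on $\wY$, which is precisely the data of an object in $\BRcat{Z,X}$ as defined in \cite[\S 7]{hp4}.

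First I would construct a functor $F\colon \DAB{Z,X}{\G}{\cA}\to \BRcat{Z,X}$. On objects, a $G$-fixed collection $A=(A_y)$ satisfies $(gA)_y = g^*A_{gy} = A_y$ for all $g\in G$ and $y\in \wY$, equivalently $A_{gy} = (g^{-1})^* A_y$. Since $G$ acts freely on $\wY$, choosing an orbit transversal shows that such an $A$ is determined by specifying a free $R$-module at each orbit representative, and the relation $A_{gy}=(g^{-1})^*A_y$ makes $(A_y)$ a $G$-equivariant family. Local finiteness on $\wY$ and relative $G$-compactness in $\wZ$ (condition (i)) are $G$-invariant conditions and match the support conditions on objects of $\BRcat{Z,X}$. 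On morphisms, the $G$-fixed condition $(g\phi)_y^z = g^*(\phi_{gy}^{gz}) = \phi_y^z$ is exactly the equivariance of a controlled morphism, the row-column finiteness (ii) is the same in both categories, and the continuous control condition (iii) is phrased identically in \cite[\S 7]{hp4}.

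Then I would define an inverse functor $F'\colon \BRcat{Z,X}\to \DAB{Z,X}{\G}{\cA}$ by regarding a $G$-equivariant family $(A_y)$ and its equivariant morphisms as objects and morphisms of $\DAcat{Z,X}$ that are $G$-fixed for the action $(gA)_y = g^*A_{gy}$. The compositions $FF'$ and $F'F$ are the identity functors, so $F$ is an equivalence (in fact an isomorphism) of additive categories with involution.

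The only real subtlety, which I do not expect to be a genuine obstacle, is a bookkeeping check that the $G$-action conventions used to define $\BRcat{Z,X}$ in \cite[\S 7]{hp4} agree with the right $\G$-action specified here (including compatibility of the $G$-action on $\cA$ with the involution, which in the free $R$-module case reduces to the usual dualization). Once these conventions are aligned, the equivalence follows by reading off the definitions, and the independence of the equivalence from the choice of resolution is already recorded in \cite[Prop.~3.5]{bartels-farrell-jones-reich1}.
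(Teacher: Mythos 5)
Your argument rests on the claim that an object of $\BRcat{Z,X}$ is ``precisely the data'' of a $G$-equivariant, locally finite family of finitely generated free $R$-modules indexed by the resolution $\wY$. That is not what $\BRcat{Z,X}$ is in \cite[\S 7]{hp4}: there an object is a family $(A_y)$ indexed by the unresolved space $Y = Z - X$, with each $A_y$ a finitely generated \emph{free $RG_y$-module}, where $G_y$ is the isotropy group of $y$ in $Y$; no resolution enters the definition at all. Under your reading the lemma becomes a tautology and the two categories literally coincide; under the actual definition they are indexed over genuinely different spaces, and the lemma has content precisely because $Y$ need not have trivial isotropy.

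What is missing from your proof, and what the paper supplies, is the mechanism that exchanges an $RG_y$-module sitting at $y\in Y$ with a $G$-equivariant family of $R$-modules over the fibre $p^{-1}(y)\subset\wY$. Concretely, the paper sets $F(A)_y=\bigoplus_{g\in G_y}A_{(g,y)}$: the points $(g,y)$ with $g\in G_y$ all lie in one $G$-orbit, so the $G$-fixed structure makes $G_y$ permute (with twists) the summands, and $F(A)_y$ becomes a finitely generated free $RG_y$-module. The inverse functor must \emph{decompose} each free $RG_y$-module as $A_y=\bigoplus_{g\in G_y}(A_y)_g$, a choice of $R$-basis that is not canonical; this is exactly why the paper notes one should really pass to a ``based'' variant of $\BRcat{Z,X}$ (objects of the form $R[T]$ with $T$ a free $G$-set over $X\times[0,1]$) before asserting strict mutual inverses. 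Your ``orbit transversal'' argument produces only a free $R$-module at each orbit representative in $G\backslash\wY\cong Y$ --- it never recovers the $RG_y$-module structure when $G_y\neq\{e\}$ --- and the claim that $FF'$ and $F'F$ are literally identity functors is too strong for the unbased category. The high-level intuition (induction/restriction along the free cover $\wY\to Y$) is correct, but you have elided the one step that the lemma is actually about.
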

\begin{proof} We define a functor $F\colon  \DAB{Z,X}{\G}{\cA} \to \BRcat{Z,X}$ by
sending an object $A$ to the free $R$-module $F(A)_y = \oplus_{g\in G_y} A_{(g,y)}$, for all $y \in Y$, with the obvious reference map to $Y$. Similarly, for a morphism $\phi\colon A \to B$, we define $F(\phi)_y^z = (\phi_{g,y}^{g',z})_{g, g' \in G}$, for all $y, z \in Y$. The verification that this definition makes sense will be left to the reader.

 Conversely, we can define a functor $F'\colon  \BRcat{Z,X} \to \DAB{Z,X}{\G}{\cA}$ on objects by decomposing an object $A = (A_y)$ of $\BRcat{Z,X}$ as $A_y=\oplus_{g\in G_y} (A_{y})_g$, since $A_y$ is a finitely-generated free $RG_y$-module. Now we let
$F'(A)_{(g,y)} = (A_{y})_g$, for all $y \in Y$, $g\in G$, and on morphisms by letting $F'(\phi)_{g,y}^{g',z} = \phi_{gy}^{g'z}$. Again the verifications will be left to the reader (technically we should work with a  category equivalent to $\BRcat{Z,X}$, in which the objects are based: each $A = R[T]$, where $T$ is a free $G$-set, and $T$ is equipped with a reference map to $X \times [0,1]$).
\end{proof}

For applications to assembly maps, we will let $X$ be a $\G$-CW complex and $Z = X \times [0,1]$ so that $Y= X \times [0,1)$.
The category just defined will be denoted
$$\DAcatG{X}:=\cD^G(X\times [0,1], X\times 1;\cA)\ .$$

\medskip
Let $\DAcatG{X}_{\emptyset}$ denote the full
subcategory of $\DAcatG{X}$ with objects
$A$ such that the intersection with the closure
$$ \text{supp}(A) = \Lbar{\{(x,t) \in \wX\times \Iopen \Vertical
A_{(x,t)} \neq 0\}}\cap (X\times 1)$$
is the empty set.

\begin{example}
If $\cA$ is the additive category of finitely generated free $R$-modules, then $\DAcatG{X}_{\emptyset}$ is equivalent
to the category of finitely generated free
$R\G$-modules, for any $\G$-CW complex $X$.
\end{example}

The quotient category
will be denoted $\DAcatG{X}^{>0}$, and
we remark that this is a germ category 
 (see \cite[\S 7]{hp4},  \cite{pw2}, \cite{cp1}). 
The objects
are the same as in $\DAcatG{X}$ but
morphisms are identified if they agree close
to $\wX = \wX\times 1$ (i.e. on the complement
of a neighbourhood of $\wX\times 0$).
Here is a useful remark.
\begin{lemma}[\cite{hp4}]\label{levels}  Let $S$ be a discrete left $G$-set. The forgetful functor
$$\Dcatl{S\times X}{G}{\cA}^{>0} \to 
\DAcatG{S\times X}^{>0}$$
is an equivalence of categories.
\end{lemma}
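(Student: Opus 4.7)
The plan is to verify that $F$ is essentially surjective and fully faithful. Since $F$ is the identity on objects, essential surjectivity is immediate, and since the germ equivalence (vanishing near $S\times X\times\{1\}$) is defined identically in both categories, $F$ is injective on morphism sets in the germ category. The substance of the lemma is to show that every morphism $[\phi]\in\Hom_{\DAcatG{S\times X}^{>0}}(A,B)$ has an $S$-level-preserving representative.

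Fix a representative $\phi\colon A\to B$ in $\DAcatG{S\times X}$, and for $y$ in the resolution of $S\times X\times[0,1)$ write $\sigma(y)\in S$ for the $S$-coordinate of $p(y)$. For each boundary point $x=(s_0,x_0,1)\in S\times X\times\{1\}$, the set $U_x:=\{s_0\}\times X\times[0,1]$ is clopen in $Z=S\times X\times[0,1]$ (since $S$ is discrete) and $G_x$-invariant (since $G_x\subseteq G_{s_0}$). Applying continuous control condition (iii) to this $U_x$ produces a $G_x$-invariant open neighborhood $V_x\subseteq U_x$ of $x$ such that $\phi_y^z=\phi_z^y=0$ whenever $p(z)\in V_x$ and $\sigma(y)\neq s_0$. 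Because $\overline{\supp A}\cup\overline{\supp B}$ is relatively $G$-compact, its intersection $K_\infty$ with the boundary $S\times X\times\{1\}$ is likewise, and can therefore be covered by finitely many $G$-orbits $G\cdot V_{x_1},\ldots,G\cdot V_{x_n}$. Set $W:=\bigcup_i G\cdot V_{x_i}$. Writing $\overline{\supp A}\cup\overline{\supp B}=G\cdot C$ for some compact $C\subset Z$, the compact set $C\setminus W$ is disjoint from $S\times X\times\{1\}$ and hence lies in $S\times X\times[0,1-\epsilon]$ for some $\epsilon>0$; by $G$-invariance of $W$ and of the projection to $[0,1]$, the same containment holds for $G\cdot C\setminus W$.

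Now define $\phi_l$ by $(\phi_l)_y^z:=\phi_y^z$ if $\sigma(y)=\sigma(z)$ and zero otherwise. This is $S$-level-preserving and $G$-equivariant (the $G$-action permutes $S$-levels), and inherits conditions (ii) and (iii) from $\phi$ since zeroing entries can only tighten those conditions. The difference $\phi-\phi_l$ is supported on pairs $(y,z)$ with $\sigma(y)\neq\sigma(z)$; by the construction of $W$, any nonzero such entry forces both $p(y),p(z)\notin W$, hence both lie in $S\times X\times[0,1-\epsilon]$. Therefore $\phi$ and $\phi_l$ agree on $S\times X\times(1-\epsilon,1)$, giving $[\phi]=[\phi_l]$ in the germ category. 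The main technical step, and the one that pins down the whole argument, is the compactness argument that promotes the pointwise continuous control conditions into a uniform $G$-invariant neighborhood $W$; this relies essentially on the discreteness of $S$ (to supply the clopen subsets $U_x$) together with the relative $G$-compactness of object supports.
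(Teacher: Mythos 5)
Your proof is correct and fleshes out, in essentially the intended way, the paper's one-sentence justification (``every morphism has a level-preserving representative in the germ category''): you zero out the off-level entries to form $\phi_l$, then use the clopen slices $\{s_0\}\times X\times[0,1]$ supplied by the discreteness of $S$ together with continuous control and the relative $G$-compactness of object supports to produce a uniform $G$-invariant collar $S\times X\times(1-\epsilon,1]$ on which $\phi$ and $\phi_l$ agree. This is exactly the compactness argument the paper elides, and your identification of the two essential hypotheses (discreteness of $S$ and relative $G$-compactness of supports) as the reason the claim holds is accurate.
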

\begin{proof}
In the germ category, every morphism has a representative which is  level-preserving with respect to projection on $S$.
\end{proof}

The category $\DAcatG{X}^{>0}$ is an additive category 
with involution, and
we obtain a functor 
$\Gspaces \to \text{AddCat}^{-}$.
The results of \cite[1.28,~4.2]{cap1} now show
that the functors $F^\lambda\colon \Gspaces \to Spectra$
defined by
\eqncount
\begin{equation}\label{CCfun}
F^\lambda_\G(X;\cA) :=\begin{cases} \KI(\DAcatG{X}^{>0})\\
\LI(\DAcatG{X}^{>0})\end{cases},
\end{equation}
where $\lambda=\KI$ or $\lambda=\LI$ respectively,
are $\G$-homotopy invariant and $\G$-excisive. 

We can now extend the definition of the assembly maps to allow coefficients in any additive category with $\G$-action.
\begin{definition}
We define the
\emph{continuously controlled assembly map} \emph{with coefficients in} $\cA$ to be the map $F^\lambda_\G(X;\cA) \to F^\lambda_\G(\bullet;\cA)$.
\end{definition}

\medskip From the methods of~\cite{hp4}, the continuously controlled assembly map with coefficients is homotopy equivalent to the assembly map with coefficients constructed in~\cite{bartels-reich2}. The most important example to consider is when $X=E_{\cV\cC}G$, in which case the \emph{Farrell-Jones conjecture with coefficients} asserts that this assembly map is an equivalence. Given a discrete group $G$, a family of subgroups $\cF$ of $G$, and coefficients $\cA$, we will refer to $$ F^\lambda_\G(E_\cF G;\cA) \to F^\lambda_\G(\bullet;\cA) $$
	as the $(G,\cF,\cA)$-{\it assembly map}.

By applying $\KI$  or $\LI$ to the
sequence of additive categories (with involution):
$$ \DAcatG{X}_\emptyset \to \DAcatG{X} 
\to \DAcatG{X}^{>0}$$
we obtain a fibration of spectra \cite{cp1}. 
As in \cite{hp4}, we have
the following description for 
the assembly map.
\begin{theorem}[{\cite[\S 7]{hp4}}]
The continuously controlled
 assembly map $$F^\lambda_\G(X;\cA) \to F^\lambda_\G(\bullet;\cA)$$
  is homotopy equivalent to the connecting map
$$\lambda( \DAcatG{X}^{>0}) \to 
\Omega^{-1}\lambda( \DAcatG{X}_{\emptyset}$$
for  $\lambda=\KI$ or $\lambda=\LI$.
\end{theorem}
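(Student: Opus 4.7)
The plan is to exploit the Karoubi filtration sequence
$$\DAcatG{X}_\emptyset \to \DAcatG{X} \to \DAcatG{X}^{>0}$$
already displayed in the text. Applying $\lambda = \KI$ or $\LI$ produces a fibration of spectra by the Carlsson-Pedersen machinery cited above, so it suffices to show that the middle spectrum $\lambda(\DAcatG{X})$ is contractible and then to identify the fiber $\lambda(\DAcatG{X}_\emptyset)$, up to a loop, with the target $F^\lambda_\G(\bullet;\cA)$ of the assembly map.

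For the vanishing of the middle term I would run an Eilenberg swindle along the open $[0,1)$ direction. The continuous control condition is imposed only at $X \times 1$, so a self-map of $\DAcatG{X}$ that repeatedly pushes each object toward $X \times 0$ is well-defined, locally finite, and $G$-equivariant. Iterating gives an endofunctor $\Sigma$ with a natural isomorphism $\Sigma \cong \mathrm{id}\oplus \Sigma$ on $\DAcatG{X}$, which forces the identity to be null in $\lambda$. Consequently the connecting map
$$\partial_X\colon \lambda(\DAcatG{X}^{>0})\longrightarrow \Omega^{-1}\lambda(\DAcatG{X}_\emptyset)$$
is a homotopy equivalence.

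To identify $\partial_X$ with the assembly map, I would use naturality in $X$. The projection $X\to \bullet$ yields a commutative square of fiber sequences with horizontal connecting maps $\partial_X$ and $\partial_\bullet$, both of which are equivalences by the previous step. The left vertical is, by definition, the assembly map $F^\lambda_\G(X;\cA)\to F^\lambda_\G(\bullet;\cA)$. It therefore suffices to show that the right vertical map, $\lambda(\DAcatG{X}_\emptyset)\to \lambda(\DAcatG{\bullet}_\emptyset)$, is a homotopy equivalence. Since objects of $\DAcatG{X}_\emptyset$ have support bounded away from $X\times 1$, the continuous control condition at $X\times 1$ becomes vacuous on this subcategory, and one can crush the $\wX$-coordinate onto a single column in $\wbullet\times [0,1)$ without violating local finiteness or $G$-compactness; this yields the desired equivalence.

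The step I expect to be most delicate is the last identification: one has to verify that the crushing functor $\DAcatG{X}_\emptyset \to \DAcatG{\bullet}_\emptyset$ is compatible with the involution, with the $G$-action on $\cA$, and produces a functor of additive categories inducing an equivalence on $\lambda$. Once this is checked using the resolution $\wX = G\times X$ and the fact that local finiteness is preserved under projection to the $G$-factor, the theorem follows by reading off the diagram.
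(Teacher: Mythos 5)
Your overall strategy---run the Karoubi filtration sequence
$\DAcatG{X}_\emptyset \to \DAcatG{X} \to \DAcatG{X}^{>0}$,
show the middle spectrum $\lambda(\DAcatG{X})$ is contractible, and then identify the
connecting map with the assembly map by naturality together with the fact that
$\DAcatG{X}_\emptyset$ is independent of $X$ up to equivalence---is exactly the route
taken in \cite[\S 7]{hp4}. The genuine error is the direction of the Eilenberg swindle.
You propose to push objects toward $X\times 0$, but $X\times\{0\}$ lies \emph{inside}
$Y=X\times[0,1)$: a single push via $\alpha(t)=t/2$ is allowed, but the infinite iterate
$\Sigma(A)=\bigoplus_{n\geq 1}\alpha^n_*A$ has support accumulating along $X\times\{0\}\subset Y$
and so is \emph{not} locally finite in $\wY$; hence $\Sigma$ does not land in $\DAcatG{X}$
and the isomorphism $\mathrm{id}\oplus\Sigma\cong\Sigma$ is unavailable. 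The swindle must push
toward the \emph{deleted} end $X\times 1$, e.g.\ via $\alpha(t)=(1+t)/2$: accumulation of
support at $X\times\{1\}$ is harmless precisely because that slice has been removed from $Y$,
the supports remain relatively $\G$-compact, and continuous control at $X\times 1$ is preserved
(indeed tightened) under such a push. With this correction $\lambda(\DAcatG{X})$ is contractible
and $\partial_X$ is an equivalence.

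The identification step is fine in outline, but your ``crushing the $\wX$-coordinate'' remark
should be replaced by invoking that $\DAcatG{X}_\emptyset$ is equivalent
to $\DAcatG{\bullet}_\emptyset$ via the map induced by $X\to\bullet$: once the closure of the
support misses $X\times 1$ the control condition is vacuous, and both categories reduce to the
same ``bounded'' $\G$-equivariant category over $\cA$ (cf.\ the Example in \S1, where for $\cA$
the finitely generated free $R$-modules both are equivalent to finitely generated free
$R\G$-modules). Composing $\partial_\bullet$ with the inverse of the induced equivalence
$\Omega^{-1}\lambda(\DAcatG{X}_\emptyset)\to\Omega^{-1}\lambda(\DAcatG{\bullet}_\emptyset)$
supplies the vertical homotopy equivalence
$F^\lambda_\G(\bullet;\cA)=\lambda(\DAcatG{\bullet}^{>0})\simeq\Omega^{-1}\lambda(\DAcatG{X}_\emptyset)$,
which together with the identity on $\lambda(\DAcatG{X}^{>0})$ exhibits the assembly map and
$\partial_X$ as homotopy equivalent, as claimed.
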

See \cite[\S 2]{hp4} for the definition of homotopy equivalent functors from 
$$\Gspaces \to Spectra, $$ and \cite[5.1]{davis-lueck1} for the result that any functor $E\colon\Or(G) \to Spectra$ out of the orbit category of
$\G$ may be extended uniquely (up to homotopy) to a functor 
$E_\% \colon \Gspaces \to Spectra$ 
which is $\G$-homotopy invariant and $\G$-excisive. This will be our method for comparing functors. The \emph{orbit category}
$\Or(G)$ is the category with objects $G/K$,
for $K$ any subgroup of $\G$, and the morphisms
are $\G$-maps.

\section{Change of Coefficients}
We will need some `change of coefficient' properties for the categories defined in the last section. The first three properties are essentially just translations of \cite[Proposition 2.8]{bartels-reich2} into our language.
The corresponding versions for additive categories with involution are
 needed to apply these change of coefficient functors to $L$-theory. 

\begin{definition} Let $K$ and $\G$ be groups, $\cA$ an additive category with commuting right $K$ and $\G$-actions, and $S$ a $K$-$\G$ biset.
Then,  the category $\DAB{S}{K}{\cA}$ has a right $\G$-action via $(g\cdot A)_{y} = g^*A_{yg^{-1}}$ and $(g\cdot\phi)_{y}^{z} = g^*\phi_{yg^{-1}}^{zg^{-1}}$, for all $y, z \in \overline{S}$. We will mostly use the level-preserving subcatetory $\DABl{S}{K}{\cA}$.
\end{definition}
 If $T$ is a left $\G$-set, and $S$ is a transitive $K$-$\G$ biset (meaning that $K\backslash S/G$ is a point), we define  a $K \times \G$-action on $S\times T$ by 
the formula $(k,g)\cdot (s,t):= (ksg^{-1}, gt)$ for all $(k,g)\in K\times \G$ and all $(s,t) \in S \times T$. This action is used in the statements below.

\begin{lemma}\label{zero} Let $T$ be a left $\G$-set, and $S$ be a transitive $K$-$\G$ biset. Then there is an additive functor
$$F\colon \DABl{S \times T\times \Iopen}{K\times \G}{\cA} \to
\DABl{T\times \Iopen}{\G}{\DABl{S}{K}{\cA}}$$
which induces an
equivalence of categories
$$\DABl{S \times T}{K\times \G}{\cA} \simeq 
\DABl{T}{\G}{\DABl{S}{K}{\cA}}\ .$$
\end{lemma}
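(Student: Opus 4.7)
The plan is to define a functor $F$ by reindexing and produce an inverse $F'$ by the reverse reindexing. On objects, send $A = (A_{(s,t)})$ to $F(A) = \{F(A)_t\}_{t\in\overline{T}}$, where $F(A)_t \in \DABl{S}{K}{\cA}$ has $s$-components $(F(A)_t)_s = A_{(s,t)}$; on morphisms, set $(F(\phi)_t^{t'})_s^{s'} = \phi_{(s,t)}^{(s',t')}$. The $\Iopen$ coordinate is carried along unchanged.

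First I would verify that the equivariance conditions decouple. Writing the action as $(k,g)\cdot(s,t) = (ksg^{-1}, gt)$, the $(K\times G)$-fixed condition on $A$ splits into the $K$-fixed condition $A_{(ks,t)} = k^\ast A_{(s,t)}$, which makes each $F(A)_t$ a $K$-fixed object of $\DABl{S}{K}{\cA}$; and the condition $A_{(sg^{-1},gt)} = g^\ast A_{(s,t)}$, which, via the formula $(g\cdot B)_s = g^\ast B_{sg^{-1}}$ defining the $G$-action on $\DABl{S}{K}{\cA}$, translates to $F(A)_{gt} = g\cdot F(A)_t$, the outer $G$-fixed condition. The level-preservation condition for morphisms on the left decomposes correspondingly: level-preservation in $T$ with values being morphisms level-preserving in $S$.

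Next I would check that the support conditions match. This is the only place the transitivity of $S$ as a $K$-$G$ biset is used: every $(K\times G)$-orbit on $S\times T$ projects onto a single $G$-orbit on $T$, and its fiber over each $t$ is a single $K$-orbit on $S$. Hence a subset of $\overline{S\times T\times\Iopen}$ is relatively $(K\times G)$-compact (resp.\ locally finite) exactly when its projection to $\overline{T\times\Iopen}$ is relatively $G$-compact (resp.\ locally finite) and its fiber over each $(t,r)$ is relatively $K$-compact (resp.\ locally finite) in $\overline{S}$. This is precisely the conjunction of the support conditions required for $F(A)$ and for each $F(A)_t$ on the right-hand side.

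The additive-functor statement of the displayed arrow is then immediate. For the equivalence (without the $\Iopen$ factor), the reverse reindexing $F'(B)_{(s,t)} = (B_t)_s$ produces a functor with $F\circ F'$ and $F'\circ F$ equal to the identity on the nose, so $F$ is in fact an isomorphism of categories. The main obstacle is the careful bookkeeping---especially using the transitive-biset hypothesis to match $(K\times G)$-compactness on $S\times T$ with the conjunction of $G$-compactness on $T$ and $K$-compactness on each $S$-fiber. Once this is arranged the equivalence is formal, and no deeper input beyond the definitions is needed.
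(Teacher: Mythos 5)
The core architecture of your proposal---define $F$ by reindexing, decouple the $(K\times\G)$-equivariance into an inner $K$-condition and an outer $\G$-condition, then exhibit an inverse by the reverse reindexing for the equivalence statement---is the same as the paper's. But there is a genuine gap, and it concerns precisely the place where the transitivity hypothesis enters.

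You never verify that $F(\phi)$ is continuously controlled in $T\times\Iopen$. Your checklist covers equivariance, supports (local finiteness and $\G$-compactness), and level-preservation, and then declares the functor statement ``immediate.'' But the morphisms in $\DABl{T\times\Iopen}{\G}{\DABl{S}{K}{\cA}}$ carry the continuous-control condition along $T\times\{1\}$ (Definition 1.1(iii)), which is a condition entirely separate from support conditions and is not automatically inherited from the control of $\phi$ along $S\times T\times\{1\}$. This is the paper's step $(5''')$, and it is the one step explicitly flagged as using the transitivity of $S$: one picks $s_0$ generating $S = K\cdot s_0\cdot\G$, uses transitivity plus $(K\times\G)$-equivariance to move an arbitrary nonvanishing component to the $s_0$-level, uses relative $(K\times\G)$-compactness of $\supp(A)$ to show only finitely many group translates $g_1,\dots,g_r$ are relevant, and then intersects the finitely many control neighborhoods coming from the control of $\phi$ at the points $g_i\cdot(x_0,1)$. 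None of this is formal bookkeeping, and it does not appear in your proposal.

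Relatedly, you mislocate the role of transitivity. You assert it is ``the only place the transitive-biset hypothesis is used'' that it governs the matching of compactness and local-finiteness conditions, but the paper's verifications of $K$-compact supports for $F(A)_{(g,t)}$ (step $(2'')$) and $\G$-compact supports for $F(A)$ (step $(4'')$) do not invoke transitivity; they use only that $\supp(A)$ is $(K\times\G)$-compact and that projections of compact sets are compact. Your supporting orbit-theoretic assertion is also incorrect as stated: for a $(K\times\G)$-orbit $O\ni(s,t)$, the fiber of $O$ over $t$ is $K\, s\, \G_t$, which is in general a union of many $K$-orbits, not a single one, even when $S$ is a transitive biset. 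So both the claimed use of transitivity and the lemma you use to justify it are off. The equivalence statement (without the $\Iopen$ factor) is indeed formal, exactly as you say---but the functor statement with $\Iopen$ is not, and the continuous-control verification is the real content of the lemma.
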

\begin{proof} We will take the standard resolutions $\wS = K  \times S$, with elements denoted $(k,s)$, for $k\in K$ and $s \in S$, and $\wT = G \times T \times [0,1]$, with elements denoted $(g,t)$, for $g\in G$ and $t \in T\times [0,1]$. Therefore $$\wS \times \wT = K \times G \times S \times T \times [0,1]$$
is a resolution for $S \times T\times [0,1]$.
We define the functor 
$$F\colon \DABl{S \times T\times \Iopen}{K\times \G}{\cA} \to
\DABl{T\times \Iopen}{\G}{\DABl{S}{K}{\cA}}$$ on objects by setting $B =F(A)_{(g,t)}$ in
$\DABl{S}{K}{\cA}$ as the object $B= (B_{(k,s)})$ with $B_{(k,s)} =
A_{(k,g,s,t)}$ in $\cA$. We use a similar formula for morphisms:
$$\left (F(\phi)_{(g,t)}^{(g',t')}\right )_{(k,s)}^{(k',s')} = \phi_{(k,g,s,t)}^{(k',g',s',t')}$$
The proof that this is a well-defined functor is given in Section \ref{appendix},  where step ($5'''$) of the argument depends on
 the assumption that $S$ is a transitive $K$-$\G$ biset.

\medskip
Since $\DABl{S \times T}{K\times \G}{\cA}\simeq 
\DABl{S \times T\times\Iopen}{K\times \G}{\cA}_\emptyset$
and $\DABl{T}{\G}{\DABl{S}{K}{\cA}}\simeq \DABl{T\times\Iopen}{\G}{\DABl{S}{K}{\cA}}_\emptyset$, the functor $F$ induces
 an additive functor
$$F\colon \DABl{S \times T}{K\times \G}{\cA}\to 
\DABl{T}{\G}{\DABl{S}{K}{\cA}}.$$
On this subcategory, we define  an inverse additive functor 
$$F'\colon \DABl{T}{\G}{\DABl{S}{K}{\cA}} \to \DABl{S \times T}{K\times \G}{\cA} 
$$
on objects by setting $ F'(B)_{(k,g,s,t)} = \left ( B_{(g,t)}\right )_{(k,s)}$,
 and a similar formula for morphisms:
 $$F'(\phi)_{(k,g,s,t)}^{(k',g',s',t')} = \left (\phi_{(g,t)}^{(g',t')}\right )_{(k,s)}^{(k',s')}$$
 It is easy to check that $F'$ is a well-defined functor.
 The functors $F$ and $F'$ are inverses, so give an equivalence of categories.
\end{proof}
\begin{corollary}\label{zerocor}
 Let $\G$ and $K$ be groups, and $\cA$ be  an additive category with commuting right $K$ and $\G$-actions,. Then
$$\DAB{\bullet}{K\times \G}{\cA} \simeq \DAB{\bullet}{\G}{\DAB{\bullet}{K}{\cA}}\ .$$
\end{corollary}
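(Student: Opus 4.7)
The plan is to obtain this corollary as an immediate specialization of Lemma \ref{zero}. Take both the left $G$-set $T$ and the $K$-$G$-biset $S$ to be the one-point set $\bullet$ (with their unique actions). Since $\bullet$ has a single element, it is trivially a transitive $K$-$G$ biset, so the hypothesis of Lemma \ref{zero} is satisfied, and $S \times T = \bullet$.

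The next observation is that the level-preserving condition in the definition of $\DABl{\bullet}{K}{\cA}$ and $\DABl{\bullet}{K \times G}{\cA}$ is vacuous: the constraint $\phi_{(s,y)}^{(s',z)} = 0$ whenever $s \neq s'$ imposes nothing when the indexing set $S$ is a singleton. The same applies to the outer category $\DABl{\bullet}{G}{-}$, since $T = \bullet$ is also a singleton. Hence the $l$-decorated categories coincide with the undecorated ones:
$$\DABl{\bullet}{K \times G}{\cA} = \DAB{\bullet}{K \times G}{\cA}, \qquad \DABl{\bullet}{K}{\cA} = \DAB{\bullet}{K}{\cA},$$
and similarly for the outer construction.

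Substituting these identifications into the equivalence
$$\DABl{S \times T}{K\times \G}{\cA} \simeq \DABl{T}{\G}{\DABl{S}{K}{\cA}}$$
of Lemma \ref{zero} with $S = T = \bullet$ yields the desired equivalence
$$\DAB{\bullet}{K \times G}{\cA} \simeq \DAB{\bullet}{G}{\DAB{\bullet}{K}{\cA}}.$$
There is no real obstacle here beyond bookkeeping: all the genuine work sits in Lemma \ref{zero}, and the transitivity hypothesis on the biset---the one place that hypothesis was used in the proof of the lemma---is satisfied for free on a one-point set.
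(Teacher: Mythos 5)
Your proposal is correct and is essentially identical to the paper's own argument: substitute $S = T = \bullet$ into Lemma \ref{zero} and observe that the level-preserving condition is automatic when the indexing set is a singleton. Nothing further is needed.
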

\begin{proof} We substitute $S = \bullet$ and $T=\bullet$ in the statement above. Note that morphisms are automatically level-preserving in this case.
\end{proof}
\begin{lemma}\label{one}
Let $K$ and $\G$ be groups, $\cA$ an additive category with commuting right $K$ and $\G$-actions, and $S$ a transitive $K$-$\G$ biset. Then, for any $\G$-CW complex $X$, the functors
$$F^\lambda_{K\times \G}(S\times X;\cA)$$
 and 
 $$F^\lambda_{\G}(X;\DABl{S}{K}{\cA})$$
  are homotopy equivalent, where $\lambda=\KI$ or $\LI$. Here $K\times \G$ acts on $S\times X$ by the formula $(k,g)\cdot (x,s):= ( ksg^{-1}, gx)$.
\end{lemma}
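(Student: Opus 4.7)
My plan is to promote the categorical equivalence of Lemma~\ref{zero} (with $T=X$) to the germ categories underlying $F^\lambda_\G(-;\cA)$, and then apply $\lambda=\KI$ or $\LI$, which sends equivalences of additive categories (with involution) to weak equivalences of spectra. Naturality in $X$ will then upgrade this to a homotopy equivalence of functors $\Gspaces\to Spectra$ in the sense of~\cite[\S 2]{hp4}.

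To begin, specialize Lemma~\ref{zero} to $T=X$ to obtain an additive functor
$$F\colon \DABl{S\times X\times \Iopen}{K\times \G}{\cA} \longrightarrow \DABl{X\times \Iopen}{\G}{\DABl{S}{K}{\cA}},$$
together with an inverse $F'$ given by the same pointwise formula as in the proof of Lemma~\ref{zero}. The key verification is that $F$ and $F'$ preserve the continuous control condition at $X\times 1$. Since these functors merely relabel indices along the $S$- and $G$-coordinates while leaving the $[0,1]$-coordinate untouched, and since continuous control is a condition entirely in the $[0,1]$-direction, the condition transports on the nose. Consequently $F$ and $F'$ descend to mutually inverse functors on the germ categories, yielding
$$\DABl{S\times X\times \Iopen}{K\times \G}{\cA}^{>0} \simeq \DABl{X\times \Iopen}{\G}{\DABl{S}{K}{\cA}}^{>0}.$$

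Next I would apply Lemma~\ref{levels} on both sides: the forgetful functor from a level-preserving germ category to the corresponding full germ category is always an equivalence. Combining the three equivalences produces an equivalence of additive categories with involution
$$\cD^{K\times \G}(S\times X\times \Iopen;\cA)^{>0} \simeq \cD^{\G}(X\times \Iopen;\DABl{S}{K}{\cA})^{>0},$$
and applying $\KI$ or $\LI$ yields the claimed weak equivalence of spectra. All of these constructions are natural in $X$ viewed as a $\G$-CW complex, since $F$ is defined by pointwise formulas on the indexing sets, so the assembled map of functors $\Gspaces\to Spectra$ is a weak equivalence object-by-object. The main obstacle I anticipate is the compatibility of $F$ (and $F'$) with continuous control; as indicated, this reduces to the trivial observation that the $[0,1]$-coordinate is preserved. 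The transitivity hypothesis on $S$ as a $K$-$\G$ biset has already been used in Lemma~\ref{zero} and plays no further role here.
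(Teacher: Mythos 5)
The critical gap is in the step where you assert that $F$ and $F'$ ``descend to mutually inverse functors on the germ categories.'' The pointwise formula $F'(B)_{(k,g,s,t)} = \bigl(B_{(g,t)}\bigr)_{(k,s)}$ does \emph{not} define a functor from $\DABl{X\times\Iopen}{\G}{\DABl{S}{K}{\cA}}$ into $\DABl{S\times X\times\Iopen}{K\times\G}{\cA}$ on all objects: it can fail the support condition (locally finite and relatively $(K\times\G)$-compact). In the coefficient-side category each $B_{(g,t)}$ must have $K$-compact support in $K\times S$, but the ``size'' of that $K$-compact set is allowed to vary with $(g,t)$ and in particular can grow without bound as $t\to 1$. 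Concretely, take $\G$ trivial, $X=\bullet$, and $S$ infinite; choose $t_n\to 1$ and let $B_{t_n}$ have support $K\cdot\{s_0,\dots,s_n\}$. Then the closure of $\supp(F'(B))$ in $K\times S\times[0,1]$ meets $K\times S\times\{1\}$ in an infinite (hence non-$K$-compact) set, so $F'(B)$ is not an object of $\DAB{S\times[0,1)}{K}{\cA}$. Since a germ category has the \emph{same} objects as the ambient category, $F'$ cannot ``descend'' either, and your claimed equivalence of germ categories is not established. (Also, the remark that continuous control ``transports on the nose'' because the $[0,1]$-coordinate is untouched is too quick: on the product side, control is measured against neighborhoods in $S\times X\times 1$, not $X\times 1$, and step $(5''')$ of Section~\ref{appendix} — the one place using transitivity of $S$ — is exactly about reconciling these two notions; it is not a purely formal relabelling.)

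The paper sidesteps this by never claiming a categorical equivalence on germs. Using \cite[5.1]{davis-lueck1} one reduces to a single orbit $T=\G/H$, and then compares the two Karoubi filtration sequences $\cC_\emptyset\to\cC\to\cC^{>0}$ via the vertical functor $F$ of Lemma~\ref{zero}. After applying $\lambda=\KI$ or $\LI$ both rows become fibrations of spectra; the middle terms are contractible by an Eilenberg swindle along $[0,1)$, so each boundary map $\lambda(\cC^{>0})\to \Omega^{-1}\lambda(\cC_\emptyset)$ is an equivalence, and the left vertical map \emph{is} an equivalence of categories by Lemma~\ref{zero}. This forces the right vertical (germ) map to be a $\lambda$-equivalence — which is strictly weaker than a categorical equivalence, but is all that Lemma~\ref{one} asserts. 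Your proof needs this fibration-sequence argument (or some substitute) to bridge from the equivalence on $\cC_\emptyset$, which is what Lemma~\ref{zero} actually gives, to the statement about germs.
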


 \begin{proof} By \cite[5.1]{davis-lueck1} it is enough to show that the two functors are $\G$-homotopy invariant, $\G$-excisive, and homotopy equivalent when restricted to the orbit category $\Or(\G)$. For the first two properties, we apply \cite[1.28,~4.2]{cap1}. For the last property, we follow the method of \cite[\S 8]{hp4}.
 Let $T = \G/H$ and consider the following commutative diagram
 $$\xymatrix@C-16pt{\DABl{\hbox to 60pt{$S \times T \times \Iopen$}}{K \times \G}{\cA}_\emptyset \ar[r]\ar[d]^F_\simeq& \DABl{\hbox to 60pt{$S \times T \times \Iopen$}}{K \times \G}{\cA} \ar[r]\ar[d]^F & \DABl{\hbox to 60pt{$S \times T \times \Iopen$}}{K \times \G}{\cA}^{>0}\ar[d]^F\\
 \DABl{\hbox to 42pt{$T\times \Iopen$}}{G}{\DABl{S}{K}{\cA}}_\emptyset\ar[r]
 &
  \DABl{\hbox to 42pt{$T\times \Iopen$}}{G}{\DABl{S}{K}{\cA}}\ar[r]&
   \DABl{\hbox to 42pt{$T\times \Iopen$}}{G}{\DABl{S}{K}{\cA}}^{>0}
   }
 $$ 
 where the vertical maps are induced by the additive functors of Lemma \ref{zero}. We apply  $\lambda=\KI$ or $\lambda=\LI$ to obtain fibrations of spectra. 
 Note that $\lambda$ applied to either of the middle two categories gives a spectrum with trivial homotopy groups (by an Eilenberg swindle). Therefore the first and third vertical maps induce a homotopy equivalence of spectra.
 Since the level-preserving condition is automatic on the germ categories, we are done.
 \end{proof}

The next  property allows us to divide out a normal subgroup in suitable circumstances. 
\begin{lemma}\label{two}
Let $N$ be a normal subgroup of $\G$, and $\cA$ be an additive category with right $\G$-action such that $N$ acts trivially. Let $X$ be a $\G$-CW complex such that $N$ acts freely on $X$. Then there is an additive functor
$$\DAcatH{X}{\G}{\cA}\  \to\  \DAcatH{N\backslash X}{\G/N}{\cA}$$
which induces an isomorphism on $K$-theory after taking germs away from the empty set.
\end{lemma}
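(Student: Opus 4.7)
The plan is to construct $F$ explicitly via a quotient of resolutions, and then establish the $K$-theory equivalence on germs by reduction to the orbit category.

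Fix a free $G$-CW resolution $p\colon \wX \to X$. Because $N\subset G$ acts freely on $\wX$, the quotient $\wX' := N\backslash\wX$ is a free $G/N$-CW complex that resolves $N\backslash X$, and $\pi\colon \wY := \wX \times [0,1) \to \wY' := \wX' \times [0,1)$ is an $N$-covering. I would define
\[
F(A)_{[y]} := A_y \quad\text{and}\quad F(\phi)^{[z]}_{[y]} := \sum_{n \in N} \phi^{nz}_{y}
\]
for arbitrary lifts $y, z \in \wY$ of $[y], [z] \in \wY'$. The first is well-defined because the $G$-invariance of $A$ applied to $n\in N$, combined with the hypothesis that $N$ acts trivially on $\cA$ (so $n^\ast = \mathrm{id}$), forces $A_{ny} = A_y$. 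The second is a finite sum by the column-finiteness axiom. Independence of the lift choices and the functoriality identity $F(\psi\phi) = F(\psi)F(\phi)$ both follow by reindexing the sums, using the $N$-equivariance $\phi^{nz}_{ny} = \phi^{z}_{y}$.

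Among the defining conditions of $\DAcatH{N\backslash X}{G/N}{\cA}$, the support and row/column finiteness conditions are straightforward using that $G/N$ acts freely and cellularly---hence properly---on $\wY'$. The continuous control condition takes more care. Given $[x] \in N\backslash X \times 1$ and a $(G/N)_{[x]}$-invariant neighbourhood $U'$, I pull back to $U := \pi^{-1}(U')$, which is $N$- and $G_x$-invariant since $\pi^{-1}((G/N)_{[x]}) = NG_x$. The continuous control of $\phi$ at a lift $x$ then produces a $G_x$-invariant $V_0 \ni x$, and I set $V' := \pi(N \cdot V_0)$, which is open and $(G/N)_{[x]}$-invariant. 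For each term $\phi^{nz}_{y}$ in the defining sum, vanishing is obtained by choosing $n_0 \in N$ so that $p(n_0^{-1}\,nz) \in V_0$ and then invoking continuous control of $\phi$ together with $\phi^{z'}_{y'} = \phi^{n_0 z'}_{n_0 y'}$.

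For the $K$-theory equivalence on germs, both assignments
\[
X \mapsto \KI\bigl(\DAcatH{X}{G}{\cA}^{>0}\bigr) \quad \text{and} \quad X \mapsto \KI\bigl(\DAcatH{N\backslash X}{G/N}{\cA}^{>0}\bigr)
\]
are $G$-homotopy invariant and $G$-excisive on the full subcategory of $G$-CW complexes on which $N$ acts freely (the second because $X \mapsto N\backslash X$ preserves $G$-pushouts and $G$-homotopies under free $N$-action). By the uniqueness argument of \cite[5.1]{davis-lueck1} it suffices to check that $F$ is a $K$-theory equivalence on each $G$-orbit $X = G/H$ with $N \cap H = e$. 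For such $H$, $N\backslash(G/H) \cong G/(NH)$ as $G$-sets and the projection $G \to G/N$ restricts to an isomorphism $H \xrightarrow{\sim} NH/N$; since $N$ acts trivially on $\cA$, the induced $H$-actions on $\cA$ on the two sides coincide. Combining Lemma~\ref{levels} with the fibration that reduces germ $K$-theory to the shifted $K$-theory of $\DAcatH{G/H}{G}{\cA}_\emptyset$ (the middle term being contractible by an Eilenberg swindle along $[0,1)$), one identifies $F$ on such an orbit with the identity map on the common spectrum $\Omega^{-1}\KI\bigl(\DAB{\bullet}{H}{\cA}\bigr)$.

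The main obstacle will be the continuous control verification, where the $N$-saturation step is essential and depends critically on the triviality of the $N$-action on $\cA$; once that is in place, the orbit-level comparison is a matter of routine bookkeeping.
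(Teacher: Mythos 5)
Your construction is \emph{not} the one in the paper, though it computes the same map on $K$-theory. The paper factors the functor as $F_2\circ F_1$, where $F_1\colon \DAcatH{X}{\G}{\cA}\to \DAcatH{N\backslash X}{\G}{\cA}$ is ``the identity on objects and morphisms'' (relaxing the control from $X$ to $N\backslash X$, which is legitimate precisely because $N$ acts freely), and $F_2\colon \DAcatH{N\backslash X}{\G}{\cA}\to\DAcatH{N\backslash X}{\G/N}{\cA}$ is a reindexing claimed to be an equivalence of categories with inverse $F_2'(A)_{(e,\bar y)} = A_{(eN,\bar y)}$ extended $G$-equivariantly. Your approach instead builds a single \emph{transfer} functor $F(A)_{[y]}=A_y$, $F(\phi)^{[z]}_{[y]}=\sum_{n\in N}\phi^{nz}_y$, pushing forward along the $N$-covering $\wY\to\wY'$. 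This is a genuinely different and in fact more robust route: the paper's $F_2'$, as written, does not obviously respect the column-finiteness axiom when $N$ is infinite (the preimage of a finite set under the $N$-covering is $N$-saturated), so the claim that $F_2$ is a plain equivalence of categories is at best delicate; your transfer sidesteps this entirely by never trying to invert $F$ on the nose, instead comparing spectra orbitwise via Davis--L\"uck. Your well-definedness and functoriality checks for $F$ (independence of lifts by reindexing the $N$-sum, finiteness of the sum via column-finiteness and free $N$-action, continuous control via $\pi^{-1}$-saturation of neighbourhoods) are correct as sketched.

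The one place you should not wave your hands is the final step, which you label ``routine bookkeeping'': you must actually verify that on an orbit $X=G/H$ with $N\cap H=e$, your transfer $F$ (and not merely some abstract isomorphism) is compatible, after passing to germs and delooping, with the Shapiro equivalences on both sides so that it becomes the identity on $\Omega^{-1}\KI(\DAB{\bullet}{H}{\cA})$. This is the crux of the whole argument and it is where the $N$-sum has to be matched against the induction/restriction functors of Proposition~\ref{three}; it is not automatic, because $F$ is a transfer rather than a restriction functor, and a sign or a reindexing error here would produce multiplication by $|N|$ rather than the identity when $N$ is finite. Spell out that diagram explicitly and the proof is complete.
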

\begin{proof}
We will construct a functor $F = F_2\circ F_1$ inducing this isomorphism in two steps. First, 
we have a functor
$F_1\colon \DAcatH{X}{\G}{\cA}  \to \DAcatH{N\backslash X}{\G}{\cA}$, which is the identity on objects and morphisms. The continuous control condition measured in $X$ is stronger than the continuous control condition measured in $N\backslash X$, so this is well-defined. This functor induces a homotopy invariant and $G$-excisive functor
$$F_1\colon \DAcatH{G/H}{\G}{\cA}^{>0} \to \DAcatH{N\backslash G/H}{\G}{\cA}^{>0}$$
for $X = G/H$, and
an equivalence $\DAB{G/H}{G}{\cA} \simeq \DAB{N\backslash G/H}{G}{\cA}$. Therefore $F_1$ induces isomorphisms on $K$-theory after
taking germs away from the empty set (as in the proof of Lemma \ref{one}). Secondly, there is  a functor
 $$F_2\colon \DAcatH{N\backslash X}{\G}{\cA}  \to \DAcatH{N\backslash X}{\G/N}{\cA}$$ 
defined on objects by $F_2(A)_{(gN, \bar y)} = A_{(g, \bar y)}$, where $\bar y \in N\backslash X\times \Iopen$. We define the functor on morphisms by $F_2(\phi)_{(gN, \bar y)}^{(g'N, \bar y')} = \phi_{(g, \bar y)}^{(g', \bar y')}$. This is well-defined by $G$-invariance of the objects and morphisms in the domain, and the continuous control conditions on morphisms agree since both are measured in $N\backslash X$. We also have an inverse functor
 $F_2'$ defined by $F_2'(A)_{(e, \bar y)} = A_{(eN, \bar y)}$ on objects, extended by $G$-equivariance, and similarly for morphisms.
 It follows that $F_2$ is an equivalence of categories.
\end{proof}

In the next statement, if $\cA$ is an additive $\G$-catgeory, we denote by $\Res_H\cA$ the same category considered as an $H$-category under restriction to a subgroup $H$ of $\G$. The following  is ``Shapiro's Lemma" in our setting.

\begin{proposition}\label{three}
Let $H$ be a subgroup of $\G$, $\cA$ be an additive category with $\G$-action, and $X$ be an $H$-CW complex. There is an additive functor
$$\DAB{X\times [0,1)}{H}{\Res_H \cA} \to 
\DAB{G \times_H X\times [0,1)}{\G}{\cA}$$
which induces an equivalence of categories after taking germs.
\end{proposition}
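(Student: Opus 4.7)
The plan is to construct $F$ as induction along $G$-translates of the $H$-equivariant data, and to exhibit an explicit quasi-inverse $F'$ given by restriction to the $[e,\cdot]$-slice. Fix a free $H$-CW resolution $p\colon \wX \to X$; then the map $G\times_H\wX \to G\times_H X$ is a free $G$-CW resolution (freeness follows because $H$ acts freely on $\wX$), and I would use it to index objects and morphisms in the target category.

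On objects, I would set
$$F(A)_{[g,y]} := (g^{-1})^* A_y,$$
and verify independence of the representative using the anti-homomorphism property $(g_1g_2)^*=g_2^*g_1^*$ together with the $H$-invariance relation $A_{hy}=(h^{-1})^*A_y$. On morphisms, after fixing left coset representatives $\{\sigma(c):c\in G/H\}$ for $H$ in $G$, I would define $F(\phi)$ to be concentrated on pairs of points lying over the same coset, with
$$F(\phi)_{[\sigma(c),y_1]}^{[\sigma(c),y_2]} := (\sigma(c)^{-1})^*\phi_{y_1}^{y_2}$$
and zero when $g_1H \neq g_2H$; $G$-invariance of $F(\phi)$ then follows from a direct cocycle computation involving $\tau = \sigma(gc)^{-1}g\sigma(c)\in H$ and the $H$-equivariance of $\phi$. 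The inverse $F'$ is the restriction $F'(B)_y := B_{[e,y]}$, $F'(\psi)_y^{y'}:=\psi_{[e,y]}^{[e,y']}$; the $G$-invariance of $B$ forces the identity $B_{[g,y]}=(g^{-1})^*B_{[e,y]}$, which gives $F\circ F'=\text{id}$ on the nose, while $F'\circ F=\text{id}$ is immediate from the definitions.

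The main obstacle is transporting the three structural conditions of the definition across $F$ and $F'$. Since $G\times_H\wX$ is topologically a disjoint union of copies of $\wX$ indexed by $G/H$, the local finiteness of $\supp(F(A))$ on each slice reduces to that of $\supp(A)$, and the relative $H$-compactness of $\supp(A)$ in $\wX\times[0,1]$ immediately induces relative $G$-compactness of $G\cdot\supp(A)$ in $G\times_H\wX\times[0,1]$; the row/column finiteness of $F(\phi)$ is automatic because $F(\phi)$ is concentrated on diagonal cosets. For continuous control (iii), the stabilizers satisfy $G_{[e,x_0]}=H_{x_0}$ and $G_{[g,x_0]}=gH_{x_0}g^{-1}$, so every $G$-invariant neighbourhood of a point in $G\times_H X\times 1$ can be $G$-translated onto an $H_{x_0}$-invariant neighbourhood of a point in the $e$-slice, which in turn arises from an $H_{x_0}$-invariant neighbourhood of $x_0$ in $X\times[0,1]$; the condition on $F(\phi)$ thus reduces to the continuous control of $\phi$ along the $e$-slice, with cross-slice entries vanishing trivially. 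These verifications actually yield an equivalence of categories, which in particular induces an equivalence after taking germs at $\wX\times 1$.
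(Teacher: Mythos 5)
Your functor is the same as the paper's: on objects $F(A)_{[g,y]}=(g^{-1})^{*}A_{y}$ and on morphisms the obvious $(g^{-1})^{*}$-twisted formula concentrated on the diagonal of $G/H$-slices (the paper writes $\Ind(\phi)_{[g,y]}^{[g',y']}=(g^{-1})^{*}\phi_{y}^{g^{-1}g'y'}$ when $g^{-1}g'\in H$, else $0$, which agrees with your coset-representative formula after extension by $G$-equivariance), and the claimed inverse is likewise restriction to the $e$-slice. So the construction matches.

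However, your final claim is wrong: $F$ and $F'$ do \emph{not} give an equivalence of the non-germ categories, and so the last sentence ``\emph{These verifications actually yield an equivalence of categories, which in particular induces an equivalence after taking germs}'' asserts a false statement and thereby leaves the actual conclusion unproved. Two concrete failures. First, $F$ is not full on the non-germ level: every $F(\phi)$ is concentrated on pairs of points in the same $G/H$-slice, but a general morphism $\psi\in\DAB{G\times_H X\times[0,1)}{\G}{\cA}$ may have nonzero entries $\psi_{[g,y]}^{[g',y']}$ with $g^{-1}g'\notin H$ (continuous control only kills these near $G\times_H X\times\{1\}$, not globally). Consequently $F\circ F'$ is \emph{not} the identity on morphisms, only on objects. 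Second, $F'$ itself is not a functor on the non-germ categories: $(\psi\circ\chi)_{[e,y]}^{[e,y']}$ is a sum over all intermediate points $[g'',y'']$, including cross-slice ones, while $(F'(\psi)\circ F'(\chi))_{y}^{y'}$ sums only over the $e$-slice; these disagree whenever $\psi$ or $\chi$ has off-diagonal entries. Passing to germs is therefore not a formality you get ``in particular''---it is where continuous control forces morphisms to become eventually diagonal across the disjoint $G/H$-slices of $G\times_H X$, which is exactly what makes $F'$ a well-defined functor and $F$ full. The paper is careful on this point: it states that the inverse is induced by $i\colon X\to G\times_H X$ \emph{``on the corresponding germ categories.''} You should either (a) construct $F$ only, check it lands in the right place, pass to germs, and then verify the restriction $F'$ is a functor there and inverse to $F$ there; or (b) explicitly show that in the germ category of the target every morphism is represented by one concentrated on diagonal slices. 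Without one of these, the argument has a genuine gap between your direct computations and the stated conclusion.
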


\begin{proof}
 This proposition is proven in~\cite[Proposition 8.3]{bartels-farrell-jones-reich1} in the case where $\cA$ is the category of finitely generated free $R$-modules. The same proof works for any coefficient category once the functor $\Ind\colon  \DAB{X\times [0,1)}{H}{\Res_H \cA} \to 
\DAB{G \times_H X\times [0,1)}{\G}{\cA}$ is defined for general $\cA$. Let $\phi\colon A\to B$ be a morphism in $\DAB{X\times [0,1)}{H}{\Res_H \cA}$. Then
\[\Ind\colon \DAB{X\times [0,1)}{H}{\Res_H \cA}\to \DAB{G \times_H X\times [0,1)}{\G}{\cA}\] is defined by $\Ind(A)_{[g,y]}=(g^{-1})^*A_{y}$, and $\Ind(\phi)_{[g,y]}^{[g',y']}=(g^{-1})^*\phi_{y}^{g^{-1}g'y'}$ if $g^{-1}g'\in H$, and is zero otherwise. The inverse of this functor on the corresponding germ categories is induced by the inclusion $i\colon X\to G \times_H X$. That is, $\Ind^{-1}(M)_y=M_{i(y)}$ and $
\Ind^{-1}(\psi)_y^{y'}=\psi_{i(y)}^{i(y')}$.
\end{proof}

\begin{remark}
The equivalences given in these three properties are natural with respect to equivariant maps $X \to X'$. If $\cA$ is an additive category with involution, one can check that the above properties continue to hold in this context. 
This is needed for applications to the $L$-theory assembly maps.
\end{remark}

\section{Assembly and subgroups}
The properties of the continuously controlled categories given so far lead to a formal statement about assembly and subgroups. This is just our version of \cite[Proposition 4.2]{bartels-reich2}. If $H$ is a subgroup of $\G$, and $\cA$ is an additive $H$-category, we denote
$\Ind_H^\G\cA := \DABl{\G}{H}{\cA}$ considered as a $\G$-category by using the $H$-$\G$ biset structure of $\G$.

\begin{proposition}\label{prop:subgroups}
Let $f\colon X \to X'$ be a $\G$-equivariant map between $\G$-CW complexes. Let $H$ be a subgroup of $\G$, and let $\cA$ be an additive category with $H$-action. Then there is a commutative diagram
$$\xymatrix
{\DAB{\Res_H X\times [0,1)}{H}{\cA}^{>\emptyset}
\ar[r]^{f_*}\ar[d]^{\simeq}&\DAB{\Res_H X'\times [0,1)}{H}{\cA}^{>\emptyset}\ar[d]^{\simeq}\\
\DAB{X\times [0,1)}{\G}{\Ind_H^\G\cA}^{>\emptyset}\ar[r]^{f_*}\ar[u]&{\DAB{X'\times [0,1)}{\G}{\Ind_H^\G\cA}}^{>\emptyset}\ar[u] }
$$
\end{proposition}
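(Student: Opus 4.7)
The plan is to realize the vertical equivalences by factoring through an intermediate $H\times G$-equivariant controlled category on the product space $G\times X$, obtained by chaining together Lemma~\ref{one} and Lemma~\ref{two}. Naturality in $X$ of each intermediate step will then supply the commutativity of the square with $f_*$.

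For the first leg I would apply Lemma~\ref{one} with $K=H$, with the ambient group taken to be $G$ itself, and with the transitive $H$-$G$ biset $S=G$ (under left multiplication by $H$ and right multiplication by $G$); transitivity is immediate since $H\backslash G/G$ is a single point. Because $\Ind_H^{G}\cA=\DABl{G}{H}{\cA}$ by definition, the lemma produces a natural equivalence
$$F^\lambda_{G}(X;\,\Ind_H^{G}\cA)\;\simeq\;F^\lambda_{H\times G}(G\times X;\,\cA),$$
with $H\times G$ acting on $G\times X$ by the formula $(h,g')\cdot(g,x)=(hgg'^{-1},\,g'x)$.

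For the second leg I would apply Lemma~\ref{two} to the normal subgroup $N:=\{e\}\times G\triangleleft H\times G$. Observe that $N$ acts freely on $G\times X$ (its action on the first coordinate is free right multiplication) and trivially on $\cA$ (since $\cA$ was only given an $H$-action). The quotient $N\backslash(G\times X)$ is identified with $X$ via $(g,x)\mapsto gx$, and the residual $(H\times G)/N\cong H$-action on the quotient matches the restricted $G$-action on $\Res_H X$. Lemma~\ref{two} therefore yields
$$F^\lambda_{H\times G}(G\times X;\,\cA)\;\simeq\;F^\lambda_H(\Res_H X;\,\cA),$$
and composing the two equivalences produces the vertical arrows of the proposition. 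Commutativity with $f_*$ is automatic, since the functors in both lemmas are defined by pointwise formulas in the space coordinate and thus intertwine pushforward along any $G$-equivariant map $f$. The only checks along the way are the transitivity of $G$ as an $H$-$G$ biset and the freeness/triviality hypotheses of Lemma~\ref{two}, both of which are immediate, so I do not anticipate a serious obstacle.
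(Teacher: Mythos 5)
Your proposal reproduces the paper's own proof: apply Lemma~\ref{one} with $K=H$, $S=\G$ viewed as an $H$-$\G$ biset (extending the $H$-action on $\cA$ by a trivial $\G$-action) to pass to $\DAB{\G\times X\times [0,1)}{H\times\G}{\cA}^{>\emptyset}$, then apply Lemma~\ref{two} with $N=\{e\}\times\G$ to divide out the free $\G$-action on $\G\times X$ and land in $\DAB{\Res_H X\times[0,1)}{H}{\cA}^{>\emptyset}$, with naturality in the space coordinate giving commutativity with $f_*$. The identification of the quotient with $\Res_H X$ and the checks of freeness and triviality you note are exactly the ones the paper records.
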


\begin{proof}
By Lemma~\ref{one} with $K=H$ and $S=\G$, we have 
$$\DAB{X\times [0,1)}{\G}{\Ind_H^\G\cA}^{>\emptyset}\simeq \DAB{\G\times X\times [0,1)}{H\times \G}{\cA}^{>\emptyset}$$
where $1\times G$ acts trivially on $\cA$ in the right-hand side.
Finally,
$$\DAB{\G\times X\times [0,1)}{H\times \G}{\cA}^{>\emptyset} \simeq \DAB{\Res_H X\times [0,1)}{H}{\cA}^{>\emptyset}$$
by applying Lemma~\ref{two} to $H\times \G$ with $N = \G$. Note that $G$ acts freely on $G \times X$, with quotient isomorphic to $\Res_H X$.
\end{proof}

\begin{corollary} Let $H$ be a subgroup of $\G$ and $\cF$ be a family of subgroups of $G$. Suppose that the $K$-theory or $L$-theory $(\G,\cF,\cB)$-assembly map is an isomorphism (respectively injection or surjection) for every  additive coefficient category $\cB$ with $\G$-action. Then the $(H,\cF|_H,\cA)$-assembly map is an isomorphism (respectively injection or surjection) for any  additive coefficient category $\cA$ with $H$-action.
\end{corollary}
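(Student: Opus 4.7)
The plan is to reduce the $(H,\cF|_H,\cA)$-assembly map for $H$ with coefficients $\cA$ to the $(G,\cF,\cB)$-assembly map for $G$ with coefficients $\cB := \Ind_H^G \cA$, using Proposition~\ref{prop:subgroups} as the main bridge. Since $\Ind_H^G\cA$ is an additive category with $G$-action, by hypothesis the $(G,\cF,\Ind_H^G\cA)$-assembly map
$$F^\lambda_G(E_\cF G;\Ind_H^G\cA)\to F^\lambda_G(\bullet;\Ind_H^G\cA)$$
is an isomorphism (respectively injection or surjection), for $\lambda=\KI$ or $\LI$.

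First I would apply Proposition~\ref{prop:subgroups} to the $G$-equivariant map $f\colon X \to X'$ given by $X = E_\cF G$ and $X' = \bullet$. The resulting commutative square has vertical equivalences and, after applying $\lambda$ to the fibration sequences of germ categories from Section~1, the bottom row becomes the $(G,\cF,\Ind_H^G\cA)$-assembly map while the top row becomes
$$F^\lambda_H(\Res_H E_\cF G;\cA)\to F^\lambda_H(\bullet;\cA).$$
Thus the top row is also an isomorphism (respectively injection or surjection) on homotopy groups.

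Next I would identify the $H$-space $\Res_H E_\cF G$ as a model for $E_{\cF|_H}H$. This is the standard observation that restriction of a classifying space for a family preserves the classifying property: an isotropy subgroup of $\Res_H E_\cF G$ has the form $H\cap G_x$ with $G_x\in\cF$, hence lies in $\cF|_H$; and for any $L\in\cF|_H$, the fixed set $(E_\cF G)^L$ is contractible because $L\in\cF$ when viewed as a subgroup of $G$. Consequently $\Res_H E_\cF G$ and $E_{\cF|_H}H$ are $H$-homotopy equivalent, so the $G$-homotopy invariance of $F^\lambda_H(-;\cA)$ (from \cite[1.28, 4.2]{cap1}) identifies the top row of our diagram with the $(H,\cF|_H,\cA)$-assembly map.

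The argument is formal once Proposition~\ref{prop:subgroups} is in hand, so I do not expect a genuine obstacle; the only point requiring care is the identification of $\Res_H E_\cF G$ as a classifying space for $\cF|_H$, and the observation that the same chain of reductions works verbatim for injection and surjection statements since each step uses equivalences of categories. The involution-compatible version of Proposition~\ref{prop:subgroups} noted in the remark of Section~2 supplies the $L$-theory case, so a single argument covers both $\lambda = \KI$ and $\lambda = \LI$.
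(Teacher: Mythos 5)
Your proposal is correct and is essentially the paper's own (very terse) proof, which simply says to substitute $X=E_\cF G$ and $X'=\bullet$ into the diagram of Proposition~\ref{prop:subgroups}; you have usefully spelled out the implicit steps, namely taking $\cB=\Ind_H^\G\cA$, identifying $\Res_H E_\cF G$ with $E_{\cF|_H}H$, and noting that the injection/surjection/involution variants come along for free since the reductions are all equivalences.
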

\begin{proof}
Just substitute $X=E_\cF G$ and $X'=\bullet$ in the diagram above.
\end{proof}

In particular, this says that the Farrell-Jones conjecture with coefficients is stable under taking subgroups. These ideas can be extended further to obtain a version of the fibered isomorphism conjecture.

\begin{proposition}\label{maintwo}
Let $\phi\colon H\to G$ be a group homomorphism, and let $\cF$ be a family of subgroups of $G$. If the $K$-theory or $L$-theory assembly map for $G$ relative to the family $\cF$ is an isomorphism
(respectively injective or surjective), with twisted coefficients in any additive $G$-category, then the assembly map
for $H$ relative to the pull-back family $\phi^*\cF=\{K\leq H\;|\;\phi(K)\in \cF\}$ is an isomorphism (respectively injection or surjection), with twisted coefficients in any additive $H$-category.
\end{proposition}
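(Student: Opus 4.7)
Given an additive $H$-category $\cA$, the plan is to produce an additive $G$-category $\cB$ so that the $(H,\phi^*\cF,\cA)$-assembly map becomes homotopy equivalent to the $(G,\cF,\cB)$-assembly map. Following Proposition~\ref{prop:subgroups}, view $G$ as an $H$-$G$-biset via $h\cdot g\cdot g' := \phi(h)gg'$, and set $\cB := \DABl{G}{H}{\cA}$ endowed with the induced right $G$-action.

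The main step is to establish, for every $G$-CW complex $X$, a natural zig-zag of homotopy equivalences
$$F^\lambda_H(\phi^*X;\cA) \simeq F^\lambda_{H\times G}(G\times X;\cA) \simeq F^\lambda_G(X;\cB),$$
where $H\times G$ acts on $G\times X$ by $(h,g)\cdot(s,x) = (\phi(h)sg^{-1},gx)$ and $\cA$ is viewed as an $(H\times G)$-category with $G$ acting trivially. The right-hand equivalence is Lemma~\ref{one} applied with $K=H$ and $S=G$; the biset $G$ is transitive since $H\backslash G/G$ is a point. For the left-hand equivalence, set $N := 1\times G \leq H\times G$. A direct check shows that $N$ acts freely on $G\times X$, that the quotient map $(s,x)\mapsto sx$ induces an $H$-homeomorphism $N\backslash(G\times X) \cong \phi^*X$ (where $\phi^*X$ denotes $X$ with the $H$-action restricted through $\phi$), and that the induced $H$-action is exactly the one through $\phi$. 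Since $N$ acts trivially on $\cA$, Lemma~\ref{two} supplies the equivalence.

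Next, specialise to $X = E_\cF G$. For every subgroup $K\leq H$, the fixed set $(\phi^*E_\cF G)^K = (E_\cF G)^{\phi(K)}$ is contractible when $\phi(K)\in\cF$ and empty otherwise; thus $\phi^*E_\cF G$ is a model for $E_{\phi^*\cF}H$. (Note that $\phi^*\cF$ is indeed a family, since $\cF$ is closed under subgroups and conjugation.) Applying the zig-zag also to $X=\bullet$, and using naturality with respect to the equivariant projection $E_\cF G\to\bullet$, produces a commutative square
$$\xymatrix{
F^\lambda_H(E_{\phi^*\cF}H;\cA) \ar[r]\ar[d]^{\simeq} & F^\lambda_H(\bullet;\cA) \ar[d]^{\simeq} \\
F^\lambda_G(E_\cF G;\cB) \ar[r] & F^\lambda_G(\bullet;\cB)
}$$
whose horizontal maps are the respective assembly maps. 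The hypothesised iso/injection/surjection on the bottom row transfers to the top row, yielding the conclusion for all three cases simultaneously.

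The main obstacle is verifying that the zig-zag produced by Lemmas~\ref{one} and~\ref{two} is natural in $X$, so that the square above really commutes on the nose. This naturality is parallel to the remark following Proposition~\ref{three}, but a rigorous argument should extract it from the explicit functors constructed in the proofs of those two lemmas; one must also confirm that, in the $L$-theoretic setting, these functors respect the involutions on the relevant categories so that the same commutative diagram holds on $\LI$-spectra.
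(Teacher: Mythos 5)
Your proposal is correct and follows essentially the same route the paper takes: it identifies $G$ as an $H$-$G$-biset via $\phi$, applies Lemma~\ref{one} (with $K=H$, $S=G$) and Lemma~\ref{two} (with $N=1\times G$) to produce the zig-zag $F^\lambda_H(\phi^*X;\cA)\simeq F^\lambda_{H\times G}(G\times X;\cA)\simeq F^\lambda_G(X;\cB)$, then specialises $X=E_\cF G$, $X'=\bullet$, noting $\phi^*E_\cF G$ models $E_{\phi^*\cF}H$ — which is precisely how the paper runs the proof of Proposition~\ref{prop:subgroups} with the $\phi$-twisted actions. The naturality concern you raise at the end is indeed the point the paper defers to the remark following Proposition~\ref{three}, so your unwinding is faithful to the published argument.
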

\begin{proof}
The proof is the same as for Proposition~\ref{prop:subgroups} using $X=E_\cF G$ and $X'=\bullet$, with the action of $H$ on $S=G$ and on $X$ defined via $\phi$, and $\Res_\phi X = E_{\phi^*\cF} G$.
\end{proof}
\section{Assembly for Extensions}
In \cite{oyono-oyono1} the Baum-Connes conjecture for topological $K$-theory  is shown to pass to extensions. We show that there is a similar statement for algebraic $K$- and $L$-theory.

The proof outline used in  \cite{oyono-oyono1} has two main steps, which we now translate into our setting. In the first step we use a discrete transitive right $G$-set $S$, which can be expressed as a single orbit $S = \{s\}\cdot G$.

\begin{proposition}\label{step1}
Let $X$ be a $\G$-CW complex, $S=\{s\}\cdot G$, and $\cA$ be an additive $\G$-category with involution. Then there is an additive functor
\[ \DAB{\Res_{G_s} X\times [0,1)}{G_s}{\Res_{G_s} \cA} \to \DAB{X \times \Iopen}{\G}{\DABl{S}{0}{\cA}}^{>0} \]
which induces a homotopy equivalence of spectra after applying $\KI$ or $\LI$. This equivalence is natural with respect to maps $X\to X'$ of $\G$-CW complexes.
\end{proposition}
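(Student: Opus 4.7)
The plan is to deduce Proposition~\ref{step1} as a formal composition of three earlier results: Shapiro's Lemma (Proposition~\ref{three}), the identification of the transitive biset $S$ with $G_s\backslash G$, and Lemma~\ref{one} applied with trivial auxiliary group.

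The first step is an elementary unpacking. Because $S=\{s\}\cdot G$ is transitive with stabilizer $G_s$, the assignment $[g,x]\mapsto(sg^{-1},gx)$ yields a $G$-equivariant isomorphism $G\times_{G_s}X\xrightarrow{\cong} S\times X$, where the left $G$-action on $S\times X$ is $g\cdot(y,x)=(yg^{-1},gx)$. This is precisely the biset action used in Lemma~\ref{one} when the outer group is trivial, and the same identification applies with $X\times[0,1)$ in place of $X$.

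The second step is to apply Proposition~\ref{three} with $H=G_s$, producing an induction functor
\[\Ind\colon\DAB{\Res_{G_s}X\times[0,1)}{G_s}{\Res_{G_s}\cA}\longrightarrow\DAB{G\times_{G_s}X\times[0,1)}{G}{\cA}\]
that becomes an equivalence after taking germs. Composing with the isomorphism of step one transports us into $\DAB{S\times X\times[0,1)}{G}{\cA}$. Inspecting the explicit formula $\Ind(\phi)_{[g,y]}^{[g',y']}=(g^{-1})^*\phi_y^{g^{-1}g'y'}$, which vanishes unless $g^{-1}g'\in G_s$, together with the equivalence $sg^{-1}=sg'^{-1}\Longleftrightarrow g^{-1}g'\in G_s$, shows that after the identification every morphism produced by $\Ind$ is automatically level-preserving over $S$. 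Thus $\Ind$ factors through the subcategory $\DABl{S\times X\times[0,1)}{G}{\cA}$.

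The third step is to apply the functor $F$ from Lemma~\ref{one} with $K$ trivial and $T=X$, obtaining a functor into $\DABl{X\times[0,1)}{G}{\DABl{S}{0}{\cA}}$. Passing to germs and invoking Lemma~\ref{levels} to replace the level-preserving germ category by the full germ category yields the required additive functor into $\DAB{X\times[0,1)}{G}{\DABl{S}{0}{\cA}}^{>0}$. Each of the three ingredients (Proposition~\ref{three}, Lemma~\ref{one}, Lemma~\ref{levels}) induces a homotopy equivalence after applying $\KI$ or $\LI$, so the composite is a homotopy equivalence of spectra. Naturality in $X$ is inherited from the naturality of the three ingredients, as noted in the remark after Proposition~\ref{three}. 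The only genuinely non-formal point is the compatibility check in step two that $\Ind$ lands in the level-preserving subcategory after the biset identification; once this is in place, the proposition is a routine composition of the established equivalences.
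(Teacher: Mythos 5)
Your proposal is correct and follows essentially the same route as the paper: compose Shapiro's Lemma (Proposition~\ref{three}) with the $G$-equivariant identification $G\times_{G_s}X\cong S\times X$ and then Lemma~\ref{one} with $K$ trivial. The observation that $\Ind$ lands in the level-preserving subcategory is true but unnecessary, since by Lemma~\ref{levels} the level-preserving condition is automatic in the germ category, which is how the paper handles it inside the proof of Lemma~\ref{one}.
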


\begin{proof}
By Proposition~\ref{three},
$$\KI(\DAB{\Res_{G_s}X\times [0,1)}{G_s}{\Res_{G_s} \cA}^{>0}) \simeq
\KI(\DAB{G \times_{G_s} X\times [0,1)}{\G}{\cA}^{>0}).$$
Since $G \times_{G_s} X$ is $G$-equivariantly homeomorphic to $(G_s\backslash G) \times X= S\times X$,  via the map $[g,x] \mapsto (Hg^{-1}, gx)$, and so
\[ \DAB{G \times_{G_s} X\times [0,1)}{\G}{\cA}^{>0} \cong \DAB{S\times X \times \Iopen}{\G}{\cA}^{>0},  \]
 where $S\times X$ has the usual left $G$-action $g\cdot(s,x) = (sg^{-1}, gx)$.
Finally, by Lemma \ref{one},
$$\KI(\DAB{S\times X \times \Iopen}{\G}{\cA}^{>0}) \simeq\KI(\cD^G(X \times \Iopen;\DABl{S}{0}{\cA})^{>0}).$$
The same proof works if we replace $\KI$ by $\LI$.
\end{proof}

\begin{example}\label{ab}
Let $\pi\colon \G \to K$ be a surjection of groups, and $V\subset K$ be a subgroup. We consider $S=K$ as a right-$(\G\times V)$-set via the transitive action
$k\cdot (g,v) := \pi(g)^{-1}kv$, where $g\in \G$, $v\in V$, and $k\in K$. Let $X$ be a $(\G\times K)$-CW complex, and let $V'\subset \G\times V$ denote the stabilizer subgroup of $e\in K$. Notice that $V'\cong \pi^{-1}(V)$, since $\pi(g)^{-1}v = e$ implies $g \in \pi^{-1}(v)$.
By Proposition~\ref{step1}, we have a commutative diagram
$$\xymatrix
{F^\lambda_{V'}(X;\cA)  
\ar[r]\ar[d]^{\simeq}&F^\lambda_{V'}(\bullet;\cA)
\ar[d]^{\simeq}\\
F^\lambda_{\G\times V}(X;\DABl{K}{0}{\cA})\ar[r]&F^\lambda_{\G\times V}(\bullet;\DABl{K}{0}{\cA})}
$$
for $\lambda=\KI$ or $\lambda=\LI$, which shows that the lower assembly map is a homotopy equivalence of spectra whenever the
upper map is an equivalence.
\end{example}

\begin{remark}\label{cd}
In the proof of Theorem~A, we will be using Example~\ref{ab} with $X=E_{\cF_\G}\G\times E_{\cF_K}K$, where $\cF_\G$ is a family of subgroups of $\G$ and $\cF_K$ is a family of subgroups of $K$ such that $\pi(H)\in \cF_K$ for every $H\in \cF_\G$. If $V\in \cF_K$, then the map $E_{\cF_\G}\G\times E_{\cF_K}K\to E_{\cF_\G}\G\times \bullet$ is a $\G\times V$-equivariant homotopy equivalence. Therefore, it is a $V'$-equivariant homotopy equivalence. Since $V'\cong \pi^{-1}(V)$, we have the homotopy commutative diagram:
 $$\xymatrix
{F^\lambda_{\pi^{-1}(V)}(E_{\cF_\G}\G;\cA)  
\ar[r]^{a}\ar[d]^{\simeq}&F^\lambda_{\pi^{-1}(V)}(\bullet;\cA)
\ar[d]^{\simeq}\\
F^\lambda_{\G\times V}(X;\DABl{K}{0}{\cA})\ar[r]^{b}&F^\lambda_{\G\times V}(\bullet;\DABl{K}{0}{\cA})}
$$
where $X=E_{\cF_\G}\G\times E_{\cF_K}K$.

If $V=K$, then $\G\cong V'\subset \G\times K$ and $\G$ acts on $X=E_{\cF_\G}\G\times E_{\cF_K}K$ via this isomorphism. Since we are assuming that $\pi(H)\in \cF_K$ for every $H\in \cF_\G$, $X$ is a model for $E_{\cF_\G}\G$. Thus, we have the homotopy commutative diagram:
 $$\xymatrix
{F^\lambda_{\G}(E_{\cF_\G}\G;\cA)  
\ar[r]^{c}\ar[d]^{\simeq}&F^\lambda_{\G}(\bullet;\cA)
\ar[d]^{\simeq}\\
F^\lambda_{\G\times K}(X;\DABl{K}{0}{\cA})\ar[r]^{d}&F^\lambda_{\G\times K}(\bullet;\DABl{K}{0}{\cA})}
$$
\end{remark}

\begin{definition} Let $\G_1$ and $\G_2$ be discrete groups, and let
$X_1$ and $X_2$ be $\G_1$- and $\G_2$-CW complexes, respectively. Let $\cA$ be a $\G_1\times \G_2$-additive category with involution. The \emph{partial assembly map},
$$\mu^{\G_1,\G_2}\colon 
F^\lambda_{\G_1\times \G_2}(X_1\times X_2;\cA) \to F^\lambda_{\G_2}(X_2;\DAB{\bullet}{\G_1}{\cA}),
$$
 is the map induced by the second factor projection $X_1\times X_2 \to \bullet \times X_2$, composed with the homotopy equivalence from Lemma~\ref{one} with $S=\bullet$.
\end{definition}

\begin{lemma}
The partial assembly map is natural in the control spaces and involution invariant. \qed
\end{lemma}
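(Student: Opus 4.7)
The plan is to verify naturality and involution invariance by unwinding the two pieces out of which $\mu^{\G_1,\G_2}$ is built and checking each piece separately; no new construction is needed.

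First, factor $\mu^{\G_1,\G_2}$ as the composition of (a) the map $F^\lambda_{\G_1\times\G_2}(X_1\times X_2;\cA)\to F^\lambda_{\G_1\times\G_2}(\bullet\times X_2;\cA)$ induced by the projection onto the second factor, and (b) the homotopy equivalence of Lemma~\ref{one}, specialised to $K=\G_1$, $\G=\G_2$, $S=\bullet$, namely
$$F^\lambda_{\G_1\times\G_2}(\bullet\times X_2;\cA)\;\xrightarrow{\simeq}\;F^\lambda_{\G_2}(X_2;\DAB{\bullet}{\G_1}{\cA}).$$
For step (a), given a $\G_1\times\G_2$-equivariant map $f_1\times f_2\colon X_1\times X_2\to X_1'\times X_2'$, the projection $X_1\times X_2\to\bullet\times X_2$ fits into an obvious commutative square with $f_1\times f_2$ and $\mathrm{id}_\bullet\times f_2$; applying the $\G_1\times\G_2$-homotopy invariant, $\G_1\times\G_2$-excisive functor $F^\lambda_{\G_1\times\G_2}(-;\cA)$ gives the desired commutative square on spectra.

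For step (b), one must check that the functor $F$ of Lemma~\ref{zero} is natural in the $\G$-set $T$ (here $T=X_2\times[0,1)$), which is immediate from the explicit formulas $F(A)_{(g,t)}=A_{(e,g,e,t)}$ and $F(\phi)_{(g,t)}^{(g',t')}{}_{(e,\bullet)}^{(e,\bullet)}=\phi_{(e,g,e,t)}^{(e,g',e,t')}$ that were used to define it: a $\G_2$-map $f_2\colon X_2\to X_2'$ induces the evident square of additive categories strictly commuting with $F$ on both sides. Passing to germs and applying $\lambda=\KI$ or $\LI$ preserves this commutativity, which gives the naturality of the equivalence of Lemma~\ref{one} in $X_2$. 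Combining with (a), the partial assembly map is natural in both $X_1$ and $X_2$.

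Finally, for involution invariance, recall that the involution on $\DAcat{Z,X}$ is given on objects by $(A^*)_y=A_y^*$ and commutes with the right $G$-action because the action on $\cA$ was assumed compatible with $\ast$. The projection $X_1\times X_2\to\bullet\times X_2$ is a map of $\G_1\times\G_2$-spaces and so the functor $F^\lambda_{\G_1\times\G_2}$ applied to it yields a map of spectra with involution; this takes care of (a). For (b), the functor $F$ of Lemma~\ref{zero} was constructed by a formula that rearranges indices without altering the underlying objects of $\cA$, so $F(A^*)=F(A)^*$ and $F(\phi^*)=F(\phi)^*$ on the nose; hence the equivalence of Lemma~\ref{one} is compatible with involutions as well. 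There is no real obstacle here: the content of the lemma is only that the formulas defining $\mu^{\G_1,\G_2}$ are manifestly functorial and respect the duality structure, and the main thing to watch is that the specialisation $S=\bullet$ in Lemma~\ref{one} meets the ``$S$ transitive'' hypothesis, which it trivially does.
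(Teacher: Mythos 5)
Your argument is correct and carries out exactly the routine unwinding that the paper's bare {\qed} signals as immediate: factor $\mu^{\G_1,\G_2}$ into the map induced by the projection $X_1\times X_2\to\bullet\times X_2$ and the equivalence of Lemma~\ref{one} with $S=\bullet$, then observe that both the projection and the explicit index-shuffling functor $F$ of Lemma~\ref{zero} manifestly commute with maps $X_1\to X_1'$, $X_2\to X_2'$ and with the termwise duality $(A^*)_y=A_y^*$. The paper provides no written proof, so there is no alternative route to compare against; your verification fills in precisely the routine details the authors elided.
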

Now the second step of the proof outline gives a criterion for the partial assembly map to be an equivalence.
\begin{proposition}\label{step2}
Let $\G$ and $K$ be groups, and let $\cB$ be an additive
$\G\times K$-category. Let $\cF_K$ be a family of subgroups of $K$. Let $X_1$ be a $\G$-CW complex and $X_2$ be a $K$-CW complex with isotropy in $\cF_K$. Suppose that
$$ F^\lambda_{\G\times V}(X_1\times \bullet;\cB) \to F^\lambda_{\G\times V}(\bullet;\cB)$$
is a homotopy equivalence for all subgroups $V\in \cF_K$. Then the partial assembly map
$$\mu^{\G,K}\colon F^\lambda_{\G\times K}(X_1\times X_2;\cB) \to F^\lambda_{K}(X_2;\DAB{\bullet}{\G}{\cB})$$
is also an equivalence for $\lambda=\KI$ or $\lambda=\LI$.
\end{proposition}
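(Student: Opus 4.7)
The plan is to apply the Davis--L\"uck uniqueness principle \cite[5.1]{davis-lueck1} in the variable $X_2$. Both $X_2 \mapsto F^\lambda_{\G\times K}(X_1\times X_2;\cB)$ and $X_2 \mapsto F^\lambda_K(X_2;\DAB{\bullet}{\G}{\cB})$ are $K$-homotopy invariant and $K$-excisive: the second by \cite[1.28,~4.2]{cap1}, and the first because a $K$-pushout (respectively $K$-equivalence) in $X_2$ becomes a $(\G\times K)$-pushout (respectively equivalence) in $X_1\times X_2$, after which $(\G\times K)$-excision of $F^\lambda_{\G\times K}(-;\cB)$ applies. Hence $\mu^{\G,K}$ is a natural transformation of $K$-equivariant homology theories in $X_2$, and it suffices to verify it is an equivalence on each orbit $X_2 = K/V$ with $V \in \cF_K$.

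On such an orbit I would reduce both sides to $(\G\times V)$-equivariant theories on $X_1$. For the domain, use the $(\G\times K)$-equivariant homeomorphism
$$X_1 \times K/V \;\cong\; (\G\times K)\times_{\G\times V} X_1,$$
in which $\G\times V$ acts on $X_1$ through its first factor (so $V$ acts trivially). Combined with Shapiro's lemma (Proposition~\ref{three}), this gives
$$F^\lambda_{\G\times K}(X_1\times K/V;\cB) \;\simeq\; F^\lambda_{\G\times V}(X_1;\Res_{\G\times V}\cB).$$
For the codomain, applying Shapiro to $K/V \cong K\times_V \bullet$ followed by Lemma~\ref{one} with $S = \bullet$ yields
$$F^\lambda_K(K/V;\DAB{\bullet}{\G}{\cB}) \;\simeq\; F^\lambda_V(\bullet;\Res_V \DAB{\bullet}{\G}{\cB}) \;\simeq\; F^\lambda_{\G\times V}(\bullet;\Res_{\G\times V}\cB).$$

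Under these identifications $\mu^{\G,K}$ at $X_2=K/V$ is carried to the map induced by the projection $X_1\to\bullet$ at the $(\G\times V)$-equivariant level, that is, to the assembly map
$$F^\lambda_{\G\times V}(X_1\times\bullet;\cB) \longrightarrow F^\lambda_{\G\times V}(\bullet;\cB),$$
which is a homotopy equivalence by hypothesis. Consequently $\mu^{\G,K}$ is an equivalence on every orbit, and Davis--L\"uck delivers the result.

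The main obstacle I anticipate is the compatibility check in the previous paragraph: confirming that $\mu^{\G,K}$ really corresponds to the hypothesis assembly map under the two chains of natural equivalences produced by Shapiro's lemma and Lemma~\ref{one}. No new computation is required, but one must assemble the naturality statements recorded after Proposition~\ref{three} and Lemma~\ref{one} into a single commutative square with $\mu^{\G,K}$ on top, and, in the case $\lambda = \LI$, verify that the equivalences are compatible with the involutions on the coefficient categories.
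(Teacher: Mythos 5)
Your proof is correct and follows essentially the same route as the paper: reduce by Davis--L\"uck to orbits $K/V$, then use Shapiro's Lemma (Proposition~\ref{three}) together with Corollary~\ref{zerocor} (Lemma~\ref{one} with $S=\bullet$) to identify $\mu^{\G,K}$ on $X_1\times K/V$ with the hypothesis map $F^\lambda_{\G\times V}(X_1\times\bullet;\cB)\to F^\lambda_{\G\times V}(\bullet;\cB)$. The paper presents this as a single commuting square with $\mu^{\G,V}$ on top, while you spell out the two vertical identifications separately, but the content and the ingredients are identical.
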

\begin{proof}
Suppose that $X_2=K/V$ for some $V\in \cF_K$. Then, by Shapiro's Lemma  (Proposition~\ref{three}),
$$\xymatrix
{F^\lambda_{\G\times V}(X_1\times \bullet;\cB)
\ar[r]^{\mu^{\G,V}}\ar[d]^{\simeq}&\ F^\lambda_V(\bullet;\DAB{\bullet}{\G}{\cB})
\ar[d]^{\simeq}\\
F^\lambda_{\G\times K}(X_1\times K/V;\cB)\ar[r]^{\mu^{\G,K}}&\ F^\lambda_{\G\times K}(K/V;\DAB{\bullet}{\G}{\cB}) }
$$
and the upper map is an equivalence by assumption, since
$F^\lambda_V(\bullet;\DAB{\bullet}{\G}{\cB})\simeq
F^\lambda_{\G\times V}(\bullet;\cB)$.
The functors $ H(X_2) :=F^\lambda_{\G\times K}(X_1\times X_2;\cB)$ and $H'(X_2):= F^\lambda_{K}(X_2;\DAB{\bullet}{\G}{\cB})$ are homotopy-invariant and $K$-excisive functors from $K$-CW complexes to spectra. Since $H(K/V) \simeq H'(K/V)$ for all $V\in \cF_K$, we conclude that $H(X_2) \simeq H'(X_2)$ for all $K$-CW complexes with isotropy in $\cF_K$.
\end{proof}

The following is our main result about extensions:
\begin{theorem}\label{mainone}
Let $N \to G \xrightarrow{\pi} K$ be a group extension, where $N\triangleleft\, G$ is a normal subgroup, and $K$ is the quotient group. Let $\cF_G$ be a family of subgroups of $G$ and $\cA$ an additive category with right $G$-action. Let $\cF_K$ be a family of subgroups of $K$ such that $\pi(H)\in \cF_K$ for every $H\in \cF_G$. Suppose that for every $V\in \cF_K$ the $(\pi^{-1}(V),\cF_G|_{\pi^{-1}(V)},\cA)$-assembly map in algebraic K-theory is an isomorphism, and that for every additive category $\cB$ with right $K$-action the $(K,\cF_K,\cB)$-assembly map in algebraic $K$-theory is injective (resp. surjective). Then the $(G,\cF_G,\cA)$-assembly map in algebraic $K$-theory is injective (resp. surjective).

The same statement holds for algebraic $L$-theory as well.
\end{theorem}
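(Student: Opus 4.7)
The plan is to follow the two-step Oyono-Oyono strategy, which has been set up in our setting by Propositions \ref{step1} and \ref{step2} together with Remark \ref{cd}. Set $\cB := \DABl{K}{0}{\cA}$, viewed as an additive $(G\times K)$-category via the biset action of Example \ref{ab}, and write $X := E_{\cF_G}G \times E_{\cF_K}K$.

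First, I would use the $V=K$ case of Remark \ref{cd} to replace the $(G,\cF_G,\cA)$-assembly map, up to vertical homotopy equivalence, by the map
\[ d\colon F^\lambda_{G\times K}(X;\cB) \to F^\lambda_{G\times K}(\bullet;\cB). \]
Using Corollary \ref{zerocor} to identify $F^\lambda_{G\times K}(\bullet;\cB) \simeq F^\lambda_K(\bullet;\DAB{\bullet}{G}{\cB})$, naturality of the partial assembly map in its control space gives a factorization $d \simeq d' \circ \mu^{G,K}$, where
\[ \mu^{G,K}\colon F^\lambda_{G\times K}(X;\cB) \to F^\lambda_K(E_{\cF_K}K;\DAB{\bullet}{G}{\cB}) \]
is the partial assembly map of Proposition \ref{step2} applied with $X_1 = E_{\cF_G}G$ and $X_2 = E_{\cF_K}K$, and $d'$ is the $(K,\cF_K,\DAB{\bullet}{G}{\cB})$-assembly map.

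Next, I would verify the hypothesis of Proposition \ref{step2} in order to conclude that $\mu^{G,K}$ is a homotopy equivalence. For each $V \in \cF_K$, the $(G\times V)$-homotopy invariance of $F^\lambda$ combined with the $V$-contractibility of $E_{\cF_K}K$ reduces the required condition to showing that the bottom map $b$ of the first diagram of Remark \ref{cd} is a homotopy equivalence. Remark \ref{cd} identifies $b$ with the $(\pi^{-1}(V),\cF_G|_{\pi^{-1}(V)},\cA)$-assembly map---the compatibility condition $\pi(H)\in \cF_K$ guarantees that $E_{\cF_G}G$ restricts to a model for $E_{\cF_G|_{\pi^{-1}(V)}}\pi^{-1}(V)$---and this is an isomorphism by hypothesis~(1).

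Finally, hypothesis~(2), applied to the $K$-coefficient category $\DAB{\bullet}{G}{\cB}$, makes $d'$ injective (resp.\ surjective); combined with the equivalence $\mu^{G,K}$, this shows $d$ is injective (resp.\ surjective), and the vertical equivalences of Remark \ref{cd} transfer the conclusion back to the $(G,\cF_G,\cA)$-assembly map. The $L$-theory case is identical, since Example \ref{ab}, Shapiro's lemma, Corollary \ref{zerocor}, and the partial assembly map all respect involutions. I expect the main obstacle to be the careful bookkeeping in the second step: matching the hypothesis of Proposition \ref{step2} with hypothesis~(1) through Remark \ref{cd}, and simultaneously verifying that the factorization $d \simeq d' \circ \mu^{G,K}$ really holds on the nose.
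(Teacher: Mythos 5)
Your proof is correct and follows essentially the same route as the paper: use Remark~\ref{cd} with $V=K$ to translate the $(G,\cF_G,\cA)$-assembly map into the map $d$, use Remark~\ref{cd} for $V\in\cF_K$ together with hypothesis~(1) to verify the hypothesis of Proposition~\ref{step2} and conclude $\mu^{G,K}$ is an equivalence, and then use hypothesis~(2) to control the $K$-assembly map $d'$. The only small inaccuracy is attributing the identification $\Res_{\pi^{-1}(V)}E_{\cF_G}G\simeq E_{\cF_G|_{\pi^{-1}(V)}}\pi^{-1}(V)$ to the compatibility condition $\pi(H)\in\cF_K$; that identification holds automatically, and the compatibility is instead needed in the $V=K$ case of Remark~\ref{cd} to ensure $X$ is a model for $E_{\cF_G}G$.
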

\begin{example}\label{exone}
Suppose that  $N$ is  \emph{finite}  normal subgroup of $G$. Then the Farrell-Jones conjecture with twisted coefficients holds for $G$  if it holds for $K=G/N$.
\end{example}
\begin{example}\label{extwo}
 Suppose that $1\to N \to G \to K \to 1$ is a group extension, and $\cF_G$ and $\cF_K$ both denote the family of finite subgroups of their respective groups. Then the conclusions of Theorem \ref{mainone} hold provided that the assembly map is injective (resp. surjective) for $K$ and for every subgroup of $G$ containing $N$ as a subgroup of finite index.
\end{example}
\begin{proof}[The Proof of Theorem \ref{mainone}] 
Let $X=E_{\cF_\G}\G\times E_{\cF_K}K$. Let $V\in \cF_K$ be given. By Remark~\ref{cd}, we have a homotopy commutative diagram:
$$\xymatrix
{F^\lambda_{\pi^{-1}(V)}(E_{\cF_\G}\G;\cA)  
\ar[r]^{a}\ar[d]^{\simeq}&F^\lambda_{\pi^{-1}(V)}(\bullet;\cA)
\ar[d]^{\simeq}\\
F^\lambda_{\G\times V}(X;\DABl{K}{0}{\cA})\ar[r]^{b}&F^\lambda_{\G\times V}(\bullet;\DABl{K}{0}{\cA})}
$$
Let $\cB = \DABl{K}{0}{\cA}$, and note that the upper map $a$ is an equivalence by assumption,  since $\Res_{\pi^{-1}(V)}E_{\cF_\G}\G$ is a universal space for the family $\cF_G|_{\pi^{-1}(V)}$. Hence, the lower map $b$ is also an equivalence.
By Proposition~\ref{step2}, we have the homotopy commutative diagram:
$$\xymatrix{F^\lambda_{\G\times K}(X;\cB)  
\ar[r]^{d}\ar[d]_{\mu^{\G,K}}^{\simeq}&
F^\lambda_{\G\times K}(\bullet;\cB)\ar[d]^{\simeq}\\
F^\lambda_{K}(E_{\cF_K}K;\DAB{\bullet}{\G}{\cB})\ar[r]^e&
F^\lambda_K(\bullet;\DAB{\bullet}{\G}{\cB})}$$
By assumption, the map $e$ is injective (resp. surjective), which implies that $d$ is injective (resp. surjective).

Using Remark~\ref{cd} again, we have the homotopy commutative diagram: 
$$\xymatrix
{F^\lambda_{\G}(E_{\cF_\G}\G;\cA)  
\ar[r]^{c}\ar[d]^{\simeq}&F^\lambda_{\G}(\bullet;\cA)
\ar[d]^{\simeq}\\
F^\lambda_{\G\times K}(X;\DABl{K}{0}{\cA})\ar[r]^{d}&F^\lambda_{\G\times K}(\bullet;\DABl{K}{0}{\cA})}
$$
Therefore, the assembly map $c$ is injective (resp. surjective).
\end{proof}

\begin{corollary}\label{vproducts}
The Farrell-Jones conjecture with twisted coefficients is true for $\G_1\times \G_2$ if and only if it is true for $\G_1$,  $\G_2$, and every product
$V_1 \times V_2$, where $V_1\leq \G_1$ and $V_2\leq \G_2$ are virtually cyclic subgroups. 
\end{corollary}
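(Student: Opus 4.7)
The plan is to deduce Corollary~\ref{vproducts} from two successive applications of Theorem~\ref{mainone}. The $(\Rightarrow)$ direction is immediate: the Corollary following Proposition~\ref{prop:subgroups} shows that the Farrell-Jones conjecture with twisted coefficients passes to subgroups, so its validity for $\G_1 \times \G_2$ implies it for $\G_1$, $\G_2$, and every $V_1 \times V_2$.

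For the $(\Leftarrow)$ direction, assume the conjecture holds with twisted coefficients for $\G_1$, $\G_2$, and every product $V_1 \times V_2$ of virtually cyclic subgroups, and fix an additive $\G_1 \times \G_2$-category $\cA$. First I will handle the intermediate case of $V \times \G_2$ for each virtually cyclic $V \leq \G_1$. To do this, apply Theorem~\ref{mainone} to the extension $V \to V \times \G_2 \xrightarrow{\pi_2} \G_2$, taking $\cF_G$ to be the family of all virtually cyclic subgroups of $V \times \G_2$ and $\cF_K$ the family of all virtually cyclic subgroups of $\G_2$. The image condition $\pi_2(H) \in \cF_K$ is automatic since homomorphic images of virtually cyclic groups are virtually cyclic. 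For each $W \in \cF_K$, one has $\pi_2^{-1}(W) = V \times W$, a product of two virtually cyclic groups, so the $(V \times W, \cF_G|_{V \times W}, \Res_{V \times W}\cA)$-assembly map is an isomorphism by hypothesis. The quotient-group hypothesis of Theorem~\ref{mainone} asks for the Farrell-Jones conjecture for $\G_2$ with \emph{all} twisted coefficients, which is also assumed. Hence Theorem~\ref{mainone} yields the Farrell-Jones conjecture for $V \times \G_2$ with coefficients $\Res_{V \times \G_2}\cA$.

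Next I will apply Theorem~\ref{mainone} a second time to the extension $\G_2 \to \G_1 \times \G_2 \xrightarrow{\pi_1} \G_1$, with $\cF_G$ the family of virtually cyclic subgroups of $\G_1 \times \G_2$ and $\cF_K$ the family of virtually cyclic subgroups of $\G_1$. The image condition again holds. For each $V \in \cF_K$, the preimage is $\pi_1^{-1}(V) = V \times \G_2$, and the required $(V \times \G_2, \cF_G|_{V \times \G_2}, \cA)$-assembly isomorphism is precisely what the intermediate step produces. The quotient-group hypothesis is the Farrell-Jones conjecture for $\G_1$ with all twisted coefficients, which is given. Theorem~\ref{mainone} therefore delivers the Farrell-Jones conjecture for $\G_1 \times \G_2$ with coefficients $\cA$, finishing the argument (the $L$-theory case is identical). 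Since the proof is a formal chaining of Theorem~\ref{mainone} with itself, I do not expect any serious obstacle; the only point requiring care is that the ``all twisted coefficients'' hypothesis demanded of the quotient group at each stage matches the standing assumptions for $\G_1$ and $\G_2$, which it does by virtue of assuming the twisted-coefficient version of the conjecture for these factors outright.
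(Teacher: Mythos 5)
Your argument is correct and follows essentially the same route as the paper: two applications of Theorem~\ref{mainone} to the factor projections, reducing the conjecture for $\G_1\times\G_2$ first to the case where one factor is virtually cyclic (using that the quotient $\G_2$ and the preimages $V\times W$ satisfy the conjecture) and then to products $V_1\times V_2$ of virtually cyclic subgroups. The paper phrases this as two successive ``we may assume'' reductions rather than your bottom-up chaining, but the content is identical.
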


\begin{proof} By our main result applied to the projection $G_1\times G_2  \to G_2$, we may assume that $G_2$ is virtually cyclic. Similarly, we may assume that $G_1$ is virtually cyclic.  Thus, we are reduced to knowing the conjecture for products $V_1 \times V_2$ of virtually cyclic subgroups of $G_1$ and $G_2$ respectively.
\end{proof}
\begin{remark}
A product $V_1 \times V_2$ of virtually cyclic subgroups can be further reduced to the basic cases $\bZ \times \bZ$, $\bZ \times D_\infty$ and $D_\infty \times D_\infty$ after quotients by finite normal subgroups. 
\end{remark}

\section{The proof of Lemma \ref{zero}}\label{appendix}

 We will check the details of Lemma \ref{zero}, which asserts that there is an additive functor $$F\colon \Dlcat{S\times T}{K\times G}{\cA}\to 
\Dlcat{T}{G}{\DABl{S}{K}{\cA}}$$
defined by
\begin{align}
	& (F(A)_{(g,t)})_{(k,s)} := A_{(k,g,s,t)} \notag\\
	& \Big(F(\phi)_{(g,t)}^{(g',t')}\Big)_{(k,s)}^{(k',s')} :=\phi_{(k,g,s,t)}^{(k',g',s',t')}. \notag
\end{align}
Here $\cA$ is an additive category with commuting right $K$ and $G$-actions, $T$ a left $G$-set and $S$ a transitive $K$-$G$ biset.
The group $K\times G$ acts on $S\times T$ by the formula $(k,g)\cdot (s,t):= (ksg^{-1}, gt)$. Recall the notation $(k,s)$ for elements of $K\times S$, and 
$(g, t)$ for elements of $G \times T \times [0,1]$. We will let 
$\epsilon\colon T\times [0,1] \to T$ denote the projection map. Notice that $\phi_{(k,g,s,t)}^{(k',g',s',t')} = 0$ unless $s=s'$ and $\epsilon(t)=\epsilon(t')$, since the morphisms $\phi\colon A\to B$ in the domain category are assumed to be level-preserving.
The free $(K\times G)$-space
 $$\wS \times \wT = K \times G \times S \times T \times [0,1]$$
is a resolution for $S \times T\times [0,1]$.
The proof that $F$ is a functor is done in the following steps.

\nr{1} {$F(\phi\circ \psi)=F(\phi)\circ F(\psi)$}. 
Since $$\Big(F(\phi)\circ F(\psi)\Big)_{(g,t)}^{(g',t')}=\sum_{(g'',t'')}F(\phi)_{(g'',t'')}^{(g',t')}\circ F(\psi)_{(g,t)}^{(g'',t'')}$$
we have that:
\begin{align*}
	\Big(\big(F(\phi)\circ F(\psi)\big)_{(g,t)}^{(g',t')}\Big)_{(k,s)}^{(k',s')} & \;=\; \Big(\sum_{(g'',t'')}F(\phi)_{(g'',t'')}^{(g',t')}\circ F(\psi)_{(g,t)}^{(g'',t'')}\Big)_{(k,s)}^{(k',s')} \\
	& \;=\; \sum_{(g'',t'')}\Big(F(\phi)_{(g'',t'')}^{(g',t')}\circ F(\psi)_{(g,t)}^{(g'',t'')}\Big)_{(k,s)}^{(k',s')} \\
	& \;=\; \sum_{(g'',t'')}\sum_{(k'',s'')}\phi_{(k'',g'',s'',t'')}^{(k',g',s',t')}\circ \psi_{(k,g,s,t)}^{(k'',g'',s'',t'')} \\
	& \;=\; (\phi\circ \psi) _{(k,g,s,t)}^{(k',g',s',t')} \\
	& \;=\; \Big(F(\phi\circ \psi)_{(g,t)}^{(g',t')}\Big)_{(k,s)}^{(k',s')}
\end{align*}

\nr{2} \emph{$F(A)_{(g,t)}$ is an object of $\DABl{S}{K}{\cA}$, 
 for every $(g,t)\in G\times T\times [0,1)$.}

\nr{2'} \emph{$F(A)_{(g,t)}$ is $K$-invariant.} For each $h \in K$, 
\begin{align*}
	\big(h^*(F(A)_{(g,t)})\big)_{(k,s)} & \;=\; h^*\big((F(A)_{(g,t)})_{(hk,hs)}\big) \\
	& \;=\; h^*\big(A_{(hk,g,hs,t)}\big) \\
	& \;=\; (h^*A)_{(k,g,s,t)} \\
	& \;=\; A_{(k,g,s,t)} \\
	& \;=\; (F(A)_{(g,t)})_{(k,s)}
\end{align*}

\nr{2''} \emph{The support of $F(A)_{(g,t)}$ is $K$-compact in $K\times S$.}

Since a discrete $K$-set is $K$-compact if and only if its image under the quotient map is finite, we need to show that $K\backslash \supp(F(A)_{(g,t)})$ is finite. Let $p$ be the projection map from $K\times G\times S\times T\times[0,1)$ to $K\times G\times S\times T$, $M=p\big(\supp(A)\big)$, and $N=p\big(\supp(A)\cap K\times \{g\}\times S\times \{t\}\big)\subset M$. 
Consider the following commutative diagram, in which $f(k',g',s',t')=(k',s'g')$, $m_{g}(k,s)=(k,sg^{-1})$, and the vertical arrows are quotient maps.
\[ \xymatrix
{ K\times G\times S\times T \ar[r]^-{f}\ar[d]^{q_{K\times G}} & K\times S \ar[r]^-{m_{g}}\ar[d]^{q_{K}} & K\times S\ar[d]^{q_{K}}\\
(K\times G)\backslash(K\times G\times S\times T) \ar[r]^-{\bar{f}} & K \backslash(K\times S) \ar[r]^-{\bar{m}_{g}} & K \backslash(K\times S)  }
\]
Since $M$ is discrete and $(K\times G)$-compact, $q_{K\times G}(M)$ is finite. Since $N\subset M$, $q_{K\times G}(N)$ is also finite. Therefore, $(\bar{m}_{g}\circ \bar{f}\circ q_{K\times G})(N)=(q_K\circ m_{g}\circ f)(N)=q_K(\supp(F(A)_{(g,t)}))$ is finite.

\nr{3} \emph{$F(\phi)_{(g,t)}^{(g',t')}$ is a morphism of $\DABl{S}{K}{\cA}$, for every $(g,t),(g',t')\in G\times T\times [0,1)$.}

\nr{3'} \emph{$F(\phi)_{(g,t)}^{(g',t')}$ is $K$-invariant.}
	The proof is similar to the proof of \nr{2'}

\vspace{.25cm}
	\nr{3''} \emph{Fix $(k,s)\in K\times S$. Then, the following set is finite:
\[ P=\Big\{ (k',s')\in K\times S \, \Big| \, \Big(F(\phi)_{(g,t)}^{(g',t')}\Big)_{(k,s)}^{(k',s')}\neq 0 \text{\ or\ } \Big(F(\phi)_{(g,t)}^{(g',t')}\Big)_{(k',s')}^{(k,s)}\neq 0 \Big\}. \]}
The sets $\Big\{ (k',g',s',t')\in K\times G\times S\times T\times[0,1)\, \Big| \, \phi_{(k,g,s,t)}^{(k',g',s',t')}\neq 0 \Big\}$
 and $\Big\{ (k',g,s',t)\in K\times G\times S\times T\times[0,1)\, \Big| \, \phi_{(k',g,s',t)}^{(k,g',s,t')}\neq 0 \Big\}$
are finite and their union projects onto $P$.

\vspace{.25cm}
	\nr{3'''} \emph{$F(\phi)_{(g,t)}^{(g',t')}$ is level preserving in $S$.} This is because $\phi$ is level-preserving in $S\times T$.

\nr{4} \emph{$F(A)$ is an object of $\Dlcat{T}{G}{\DABl{S}{K}{\cA}}$.}

\nr{4'} \emph{$F(A)$ is $G$-invariant.} For each $\gamma \in G$,
\begin{align*}
	\big(\gamma^*(F(A))_{(g,t)}\big)_{(k,s)} & \;=\; \big(\gamma^*(F(A)_{(\gamma g,\gamma t)})\big)_{(k,s)} \\
	& \;=\; \gamma^*\big((F(A)_{(\gamma g,\gamma t)})_{(k,s\gamma^{-1})}\big) \\
	& \;=\; \gamma^*\big(A_{(k,\gamma g,s\gamma^{-1},\gamma t)}\big)  \\
	& \;=\; (\gamma^*A)_{(k,g,s,t)}\\
	& \;=\; A_{(k,g,s,t)} \\
	& \;=\; (F(A)_{(g,t)})_{(k,s)}
\end{align*}
	
\nr{4''} \emph{The support of $F(A)$ is relatively $G$-compact in $G\times T\times [0,1)$.}
	
Let $p\colon  K\times G\times S\times T\times [0,1)\to G\times T\times [0,1)$ be the projection map. Since $\supp(A)$ is relatively $(K\times G)$-compact and $p(\supp(A))=\supp(F(A))$, $\supp(F(A))$ is relatively $G$-compact in $G\times T\times [0,1)$.

\nr{4'''}\emph{The support of $F(A)$ is locally finite in $G\times T\times [0,1)$.} 

Let $(g,t)\in \supp(F(A))$ be given. We must find an open neighborhood $U\subset G\times T\times [0,1)$ of $(g,t)$ such that $U\cap \supp(F(A))=\{(g,t)\}$. Let
$$L= \{ (k,s)\in K\times S \; | \; (k,g,s,t)\in \supp(A) \}.$$
From {\bf (${\mathbf 1''}$)}, we know that $L$ is $K$-compact. That is, $L=K\cdot(K_0\times S_0)$, where $K_0\subset K$ and $S_0\subset S$ are finite sets. Since $\supp(A)$ is locally finite in $K\times G\times S\times T\times [0,1)$, for each $(k_i,s_i)\in K_0\times S_0$, there is a neighborhood $U_i\subset T\times [0,1)$ of $t$, such that 
$$(\{k_i\}\times \{g\}\times \{s_i\} \times U_i)\cap \supp(A)=\{(k_i,g,s_i,t)\}.$$
Thus, for each $(k,s)\in L$, there is an $i$, such that 
$$(\{k\}\times  \{g\} \times \{s\} \times  U_i)\cap \supp(A)=\{(k,g,s,t)\}.$$
Therefore, if we let $U=  \{g\} \times  (\cap_iU_i)$, then $U\cap \supp(F(A))=\{(g,t)\}$.

\nr{5} \emph{$F(\phi)$ is a morphism in $\Dlcat{T}{G}{\DABl{S}{K}{\cA}}$.}

\nr{5'} \emph{$F(\phi)$ is $G$-invariant.} The proof is similar to the proof of \nr{3'}

\nr{5''} \emph{Fix $(g,t)\in G\times T\times [0,1)$. Then, the following set is finite  
\[ \Big\{ (g',t')\in G\times T\times [0,1) \, \Big| \, F(\phi)_{(g,t)}^{(g',t')}\neq 0 \text{\ or\ } F(\phi)_{(g',t')}^{(g,t)}\neq 0 \Big\}.\]}

As we saw in {\bf (${\mathbf 2''}$)}, $\supp(A)\cap K\times \{g\} \times S\times \{t\}$ is $K$-compact. Therefore, it is contained in $K\cdot (K_0\times \{g\} \times S_0\times  \{t\})$, for some finite subsets $K_0\in K$ and $S_0\in S$. Notice that by $K$-equivariance, $F(\phi)_{(g,t)}^{(g',t')}\neq 0$ if and only if there exists an $s_0\in S_0$, $k_0\in K_0$ and $k'\in K$ such that $\phi_{(k_0,g,s_0,t)}^{(k',g',s,t')}\neq 0$. But for each $k_0\in K_0$ and each $s_0\in S_0$, there are only finitely many $k'\in K$, $g'\in G$ and $t'\in T\times [0,1)$ such that $\phi_{(k_0,g,s_0,t)}^{(k',g',s_0,t')}\neq 0$. Therefore, there are only finitely many $g'\in G$ and $t'\in T\times [0,1)$ such that $F(\phi)_{(g,t)}^{(g',t')}\neq 0$. A similar argument shows that there are only finitely many $g'\in G$ and $t'\in T\times  [0,1)$ such that $F(\phi)_{(g',t')}^{(g,t)}\neq 0$.

\nr{5'''} \emph{$F(\phi)$ is continuously controlled in $T\times [0,1)$.}
	
Let $\phi\colon A\to B$ be a morphism in $\Dlcat{S\times T}{K\times G}{\cA}$. Let $(x_0,1)\in T\times [0,1]$ and a $G_{x_0}$-invariant neighborhood $U\subset T\times [0,1]$ of $(x_0,1)$ be given. We must find a $G_{x_0}$-invariant neighborhood $V\subset T\times [0,1]$ of $(x_0,1)$, such that $F(\phi)_{(g,t)}^{(g',t')}=0=F(\phi)^{(g,t)}_{(g',t')}$ whenever $(g,t)\in G\times V$ and $(g',t')\notin G\times U$.

By definition, $\Big(F(\phi)_{(g,t)}^{(g',t')}\Big)_{(k,s)}^{(k',s)}=\phi_{(k,g,s,t)}^{(k',g',s,t')}$. Let $s_0\in S$ with $K\cdot s_0\cdot G = S$, and let $H\leq K\times G$ be the stabilizer subgroup of $s_0$. We will identify $G \times T\times [0,1]$ with the level $\{s_0\} \times G \times T\times [0,1]$.  Notice that the intersection of $\supp(A)$ with $K\times G\times \{s_0\} \times T\times [0,1)$ is contained in, 
\[ \bigcup_{(a,b)\in H} a\cdot K_0 \times b\cdot G_0\times \{s_0\} \times b\cdot T_0\times [0,1),  \] 
where $K_0\subset K$, $G_0\subset G$ and $T_0\subset T$ are finite sets. This holds since $\supp(A)$ is relatively $(K\times G)$-compact and any element of $(K\times G)-H$ will move $s_0$ to another level in $S$.

Suppose that $\phi_{(k,g,s,t)}^{(k',g',s,t')}\neq 0$ for some $k\in K$, $g\in G$ and $t\in U$. Then we can write $\tau s\gamma^{-1}=s_0$, for some $\tau  \in K$ and some $\gamma \in G$. 
By equivariance, $\phi_{(\tau k,\gamma g,s_0,\gamma t)}^{(\tau k',\gamma g',s_0,\gamma t')}\neq0$.  For this to happen, 
$(\tau k,\gamma g,s_0,\gamma t)\in \supp(A)$. This implies that there exists $(a,b) \in H$ such that $$(\tau k,\gamma g,s_0,\gamma t)\in a\cdot K_0 \times b\cdot G_0\times \{s_0\} \times b\cdot T_0\times [0,1),$$ which is equivalent to saying that
\[ (a^{-1}\tau k,b^{-1}\gamma g,s_0,b^{-1}\gamma t)\in K_0 \times G_0\times \{s_0\} \times T_0\times [0,1) \]
In particular, $b^{-1}\gamma t \in b^{-1}\gamma U \cap  (T_0\times [0,1))$.

Since $T_0$ is finite, there are only finitely many elements of $G$, say $\{g_1,g_2,\dots,g_r\}$, such that $g_i U \cap (T_0\times [0,1)) \neq \emptyset$. Therefore, 
$\gamma=bg_i$ for some $(a,b)\in H$ that fixes $s_0$ and some $i$ with $1\leq i \leq r$. 

Since $\phi$ is continuously controlled at $g_i\cdot (x_0,1)$ along $S\times T\times 1$, there is a neighborhood $V_i\subset T\times [0,1]$ of $( x_0,1)$ such that $\phi_{(k,g,s_0,g_it)}^{(k',g',s_0,g_it')}=0$ if $t\in  V_i$ and $t'\notin U$, for $1\leq i \leq r$.

Let $V=\cap_i V_i$. Then, if $t\in  V$ and $t'\notin U$, we have 
$$ \phi_{(a^{-1}\tau k,g_ig,s_0,g_it)}^{(a^{-1}\tau k',g_ig',s_0,g_it')}=0$$
and hence 
$$0 = \phi_{(\tau k,bg_ig,s_0,bg_it)}^{(\tau k',bg_ig',s_0,bg_it')} = 
\phi_{(\tau k,\gamma g,s_0,\gamma t)}^{(\tau k',\gamma g',s_0,\gamma t')} =
\phi_{(k,g,s,t)}^{(k',g',s,t')} $$
by equivariance of the morphisms, and the relations $\gamma=bg_i$, $s_0 = \tau s\gamma^{-1}$.
A similar argument shows that $F(\phi)^{(g,t)}_{(g',t')} = 0$ if $t\in V$ and $t'\notin U$. Therefore $F(\phi)$ is continuously controlled along $T \times 1$. 
\qed

\providecommand{\bysame}{\leavevmode\hbox to3em{\hrulefill}\thinspace}
\providecommand{\MR}{\relax\ifhmode\unskip\space\fi MR }
\providecommand{\MRhref}[2]{%
  \href{http://www.ams.org/mathscinet-getitem?mr=#1}{#2}
}
\providecommand{\href}[2]{#2}

\end{document}